\documentclass[12pt,reqno,twoside]{amsart}
\usepackage{amsmath}
\usepackage{amssymb,amsfonts,amsthm,indentfirst}
\usepackage{epsfig}
\usepackage{graphicx}
\usepackage{hyperref}

\setcounter{MaxMatrixCols}{10}

\topmargin=1cm \textwidth=34cc \textheight=21cm \baselineskip=16pt
\evensidemargin=1cm \oddsidemargin=1cm
\newtheorem{theorem}{Theorem}[section]
\newtheorem{proposition}{Proposition}[section]

\newtheorem{lemma}{Lemma}[section]
\newtheorem{corollary}{Corollary}[section]

\newcommand{\be}{\begin{equation}}
\newcommand{\ee}{\end{equation}}
\newcommand{\bea}{\begin{eqnarray}}
\newcommand{\eea}{\end{eqnarray}}
\newcommand{\eeas}{\end{eqnarray*}}
\newcommand{\beas}{\begin{eqnarray*}}

\numberwithin{equation}{section}

\begin{document}

\begin{center}
\textbf{On Quasi-Einstein Sequential Warped Product Manifolds} \bigskip

\textbf{Fatma KARACA and Cihan \"{O}ZG\"{U}R}
\end{center}

\bigskip \textbf{Abstract.} We find the necessary conditions for a
sequential warped product manifold to be a quasi-Einstein manifold. We also
investigate the necessary and sufficient conditions for a sequential
standard static space-time and a sequential generalized Robertson-Walker
space-time to be a manifold of quasi-constant curvature.

\textbf{Mathematics Subject Classification. }53C25, 53C50, 53B20.

\textbf{Keywords and phrases.} Warped product, sequential warped product,
quasi-Einstein manifold, Einstein manifold.

\medskip

\section{\textbf{Introduction}\label{sect-introduction}}

A Riemannian manifold $(M,g),$ $n\geq 2,$ is said to be an \textit{Einstein
manifold} \cite{Oneil} if its Ricci tensor $Ric$ is the form of $Ric=\alpha
g,$ where $\alpha $ $\in $ $C^{\infty }\left( M\right) .$ It is well-known
that if $n>2$ then $\alpha $ is a constant. Let $(M,g),$ $n>2,$ be a
Riemannian manifold, then the manifold $(M,g)$ is defined to be a \textit{%
quasi-Einstein manifold} \cite{CM} if the condition
\begin{equation}
Ric(X,Y)=\alpha g(X,Y)+\beta A(X)A(Y)  \label{quasieinstein}
\end{equation}%
is satisfied on $M$, where $\alpha $ and $\beta $ are some differentiable
functions on $M$ with $\beta \neq 0$ and $A$ is a non-zero 1-form such that

\begin{equation}
g(X,U)=A(X),  \label{1form}
\end{equation}%
for every vector field $X$ and $U\in \chi (M)$ being a unit vector field. If
$\beta =0$, then the manifold turns into an Einstein manifold. The notion of
quasi-Einstein manifolds arose during the study of exact solutions of the
Einstein field equations as well as during considerations of quasi-umbilical
hypersurfaces of conformally flat spaces \cite{Gl}.

Let $(M,g),$ $n>2,$ be a Riemannian manifold, then the manifold $(M,g)$ is
defined to be \textit{a quasi-constant curvature} \cite{Chen-Yano} if its
curvature tensor field $R$ satisfies

\begin{equation*}
R(X,Y,Z,W)=a\left[ g(Y,Z)g(X,W)-g(X,Z)g(Y,W)\right]
\end{equation*}%
\begin{equation*}
+b\left[ g(X,W)A(Y)A(Z)-g(X,Z)A(Y)A(W)\right.
\end{equation*}%
\begin{equation*}
\left. +g(Y,Z)A(X)A(W)-g(Y,W)A(X)A(Z)\right]
\end{equation*}%
where $a,b$ are real valued functions on $M$ such that $b\neq 0$ and $A$ is
a 1-form defined by (\ref{1form}). If $b=0,$ then $M$ is a space of constant
curvature. It is easy to see that every Riemannian manifold of
quasi-constant curvature is a quasi-Einstein manifold.

The notion of a warped product was introduced by Bishop and O'Neill in \cite%
{BO} to construct Riemannian manifolds with negative sectional curvature.
Warped products have remarkable applications in differential geometry as
well as in mathematical physics, especially in general relativity.

Warped products have some generalizations like doubly warped products and
multiply warped products. \ A new generalization of warped products is named
as \textit{sequential warped product} \cite{DSU} where the base factor of
the warped product is itself a new warped product manifold. Let $\left(
M_{i},g_{i}\right) $ be $m_{i}$-dimensional Riemannian manifolds,
respectively, where $i\in \left\{ 1,2,3\right\} $ and also $\overline{M}%
=\left( M_{1}\times _{f}M_{2}\right) \times _{h}M_{3}$ be an $%
m=m_{1}+m_{2}+m_{3}$ dimensional Riemannian manifold. Let $%
f:M_{1}\rightarrow \left( 0,\infty \right) $ and $h:M_{1}\times
M_{2}\rightarrow \left( 0,\infty \right) $ be two smooth positive functions
on $M_{1}$ and $M_{1}\times M_{2},$ respectively. The sequential warped
product manifold is the triple product manifold $\overline{M}=\left(
M_{1}\times _{f}M_{2}\right) \times _{h}M_{3}$ equipped with the metric
tensor%
\begin{equation}
\overline{g}=g_{1}\oplus f^{2}g_{2}\oplus h^{2}g_{3},  \label{metric}
\end{equation}%
where $f:M_{1}\rightarrow \left( 0,\infty \right) $ and $h:M_{1}\times
M_{2}\rightarrow \left( 0,\infty \right) $ are warping functions \cite{DSU}.
Taub-Nut and stationary metrics, Schwarzschild and generalized Riemannian
anti de Sitter $\mathbb{T}^{2}$ black hole metrics are non-trivial examples
of sequential warped products \cite{DSU}.

We shall denote $\overline{\nabla },$ $\nabla ^{1}$, $\nabla ^{2}$, $\nabla
^{3}$, $\overline{Ric}$, $Ric^{1},$ $Ric^{2},$ $Ric^{3}$ $,$ $\overline{scal}%
,$ $scal_{1},scal_{2}$ and $scal_{3}$ the Levi-Civita connections, the Ricci
curvatures and the scalar curvatures of the $\overline{M},$ $M_{1},$ $M_{2}$
and $M_{3}$, respectively.

Now, we give the following lemmas:

\begin{lemma}
\label{lemma1}\cite{DSU} Let $\overline{M}=\left( M_{1}\times
_{f}M_{2}\right) \times _{h}M_{3}$ be a sequential warped product with
metric $\overline{g}=g_{1}\oplus f^{2}g_{2}\oplus h^{2}g_{3}$. If $X_{i},$ $%
Y_{i}\in \chi \left( M_{i}\right) $ for $i\in \left\{ 1,2,3\right\} $ then

$i)$ $\overline{\nabla }_{X_{1}}Y_{1}=\nabla _{X_{1}}^{1}Y_{1},$

$ii)$ $\overline{\nabla }_{X_{1}}X_{2}=\overline{\nabla }%
_{X_{2}}X_{1}=X_{1}(\ln f)X_{2},$

$iii)$ $\overline{\nabla }_{X_{2}}Y_{2}=\nabla
_{X_{2}}^{2}Y_{2}-fg_{2}(X_{2},Y_{2})grad^{1}f,$

$iv)$ $\overline{\nabla }_{X_{3}}X_{1}=\overline{\nabla }%
_{X_{1}}X_{3}=X_{1}(\ln h)X_{3},$

$v)$ $\overline{\nabla }_{X_{2}}X_{3}=\overline{\nabla }_{X_{3}}X_{2}=X_{2}(%
\ln h)X_{3},$

$vi)$ $\overline{\nabla }_{X_{3}}Y_{3}=\nabla
_{X_{3}}^{3}Y_{3}-hg_{3}(X_{3},Y_{3})gradh.$
\end{lemma}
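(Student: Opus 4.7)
The plan is to view the sequential warped product as an iterated classical warped product and apply the Bishop--O'Neill connection formulas twice in succession. Set $B=M_{1}\times _{f}M_{2}$ with the warped metric $g_{B}=g_{1}\oplus f^{2}g_{2}$, so that $\overline{M}=B\times _{h}M_{3}$ is a single warped product of $B$ with $M_{3}$ using warping function $h$, now regarded as a positive function on $B$. All six identities of the lemma should fall out of the standard warped-product connection formulas, applied first to $(M_{1}\times _{f}M_{2})$ and then to $(B\times _{h}M_{3})$.

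The first step is to recall the Bishop--O'Neill identities for the single warped product $B$: for lifts $X_{1},Y_{1}$ of fields on $M_{1}$ and $X_{2},Y_{2}$ of fields on $M_{2}$,
\begin{equation*}
\nabla ^{B}_{X_{1}}Y_{1}=\nabla ^{1}_{X_{1}}Y_{1},\qquad \nabla ^{B}_{X_{1}}X_{2}=\nabla ^{B}_{X_{2}}X_{1}=X_{1}(\ln f)X_{2},
\end{equation*}
\begin{equation*}
\nabla ^{B}_{X_{2}}Y_{2}=\nabla ^{2}_{X_{2}}Y_{2}-fg_{2}(X_{2},Y_{2})\,\mathrm{grad}^{1}f.
\end{equation*}
These follow directly from the Koszul formula, using that brackets of lifts from distinct factors vanish and that $f$ depends only on $M_{1}$. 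Since $\overline{\nabla }$ restricted to vector fields tangent to $B$ coincides with $\nabla ^{B}$, items $(i)$--$(iii)$ of the lemma are immediate.

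I would then apply Bishop--O'Neill a second time to the outer warped product $\overline{M}=B\times _{h}M_{3}$, yielding $\overline{\nabla }_{U}X_{3}=\overline{\nabla }_{X_{3}}U=U(\ln h)X_{3}$ for any $U$ tangent to $B$ and $X_{3}\in \chi (M_{3})$, together with $\overline{\nabla }_{X_{3}}Y_{3}=\nabla ^{3}_{X_{3}}Y_{3}-hg_{3}(X_{3},Y_{3})\,\mathrm{grad}\,h$. Specializing $U$ to a lift $X_{1}$ gives $(iv)$, specializing $U$ to a lift $X_{2}$ gives $(v)$, and the second identity is $(vi)$. The only point requiring care is that $\mathrm{grad}\,h$ in $(vi)$ must be computed with respect to the twisted metric $g_{B}$ on $B$, not the plain product metric on $M_{1}\times M_{2}$; this is the main potential source of confusion, but no actual obstacle, since the iterated formulation makes the choice of metric unambiguous and reduces the whole argument to two clean applications of the classical formula.
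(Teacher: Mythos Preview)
Your argument is correct. The paper does not actually prove this lemma; it is quoted verbatim from \cite{DSU} without proof, so there is no in-paper proof to compare against. Your iterated Bishop--O'Neill strategy is the natural and standard route: first identify $\overline{\nabla}$ restricted to $B$-tangent fields with $\nabla^{B}$ via the outer warped product, then unpack $\nabla^{B}$ via the inner warped product. The one subtlety you flag---that $\mathrm{grad}\,h$ in item $(vi)$ is the $g_{B}$-gradient on $M_{1}\times_{f}M_{2}$---is exactly right and is consistent with how the paper uses this formula later (e.g.\ in Lemma~\ref{lemma3} and the applications in Section~3).
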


\begin{lemma}
\label{lemma2}\cite{DSU} Let $\overline{M}=\left( M_{1}\times
_{f}M_{2}\right) \times _{h}M_{3}$ be a sequential warped product with
metric $\overline{g}=g_{1}\oplus f^{2}g_{2}\oplus h^{2}g_{3}$. If $X_{i},$ $%
Y_{i},$ $Z_{i}\in \chi \left( M_{i}\right) $, then

$i)$ $\overline{R}(X_{1},Y_{1})Z_{1}=R^{1}(X_{1},Y_{1})Z_{1},$

$ii)$ $\overline{R}(X_{2},Y_{2})Z_{2}=R^{2}(X_{2},Y_{2})Z_{2}-\left\Vert
grad^{1}f\right\Vert ^{2}\left[ g_{2}\left( X_{2},Z_{2}\right)
Y_{2}-g_{2}\left( Y_{2},Z_{2}\right) X_{2}\right] ,$

$iii)$ $\overline{R}(X_{1},Y_{2})Z_{1}=-\frac{1}{f}%
H_{1}^{f}(X_{1},Z_{1})Y_{2},$

$iv)$ $\overline{R}(X_{1},Y_{2})Z_{2}=fg_{2}\left( Y_{2},Z_{2}\right) \nabla
_{X_{1}}^{1}grad^{1}f,$

$v)$ $\overline{R}(X_{1},Y_{2})Z_{3}=0,$

$vi)$ $\overline{R}(X_{i},Y_{i})Z_{j}=0$ for $i\neq j,$

$vii)$ $\overline{R}(X_{i},Y_{3})Z_{j}=-\frac{1}{h}H^{h}(X_{i},Z_{j})Y_{3}$
for $i,j=1,2,$

$viii)$ $\overline{R}(X_{i},Y_{3})Z_{3}=hg_{3}\left( Y_{3},Z_{3}\right)
\nabla _{X_{1}}gradh$ for $i=1,2,$

$ix)$ $\overline{R}(X_{3},Y_{3})Z_{3}=R^{3}(X_{3},Y_{3})Z_{3}-\left\Vert
gradh\right\Vert ^{2}\left[ g_{3}\left( X_{3},Z_{3}\right) Y_{3}-g_{3}\left(
Y_{3},Z_{3}\right) X_{3}\right] .$
\end{lemma}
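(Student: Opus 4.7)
The plan is to reduce the computation to two successive applications of the classical warped product curvature identities (e.g., the six formulas in O'Neill's book). View $\overline{M}=B\times_{h}M_{3}$ as a single warped product with base $B=M_{1}\times_{f}M_{2}$ and fiber $M_{3}$; the inner factor $B$ is itself a warped product and its curvature admits the same template. Combined with Lemma \ref{lemma1} for the connection $\overline{\nabla}$, every identity in the statement should fall out by systematic substitution.

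I would first dispatch parts $(i)$--$(iv)$, whose arguments are all tangent to the base $B$. Since the base of any warped product is totally geodesic, the restriction of $\overline{R}$ to triples in $\chi(B)$ coincides with $R^{B}$, so the task reduces to expanding $R^{B}$ for the warped product $M_{1}\times_{f}M_{2}$. Part $(i)$ is the base--base formula. Part $(ii)$ is the fiber--fiber formula, which produces the $\|grad^{1}f\|^{2}$ correction once I write $\overline{g}$-arguments in terms of $f^{2}g_{2}$ and cancel the factor $f^{2}$. Parts $(iii)$ and $(iv)$ are the mixed formulas, producing the Hessian $H^{f}_{1}$ and the term $\nabla^{1}_{X_{1}}grad^{1}f$ respectively.

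For parts $(v)$--$(ix)$, where at least one argument lies in $\chi(M_{3})$, I would apply the same six formulas to the outer warped product $\overline{M}=B\times_{h}M_{3}$. The vanishing identities $R(X,Y)V=0$ and $R(V,W)X=0$ directly yield $(v)$ and $(vi)$. The horizontal--vertical--horizontal formula, applied with $X_{i},Z_{j}\in\chi(B)$ horizontal and $Y_{3}$ vertical, gives $(vii)$ in the form $-\frac{1}{h}H^{h}(X_{i},Z_{j})Y_{3}$. The horizontal--vertical--vertical formula, once $\overline{g}(Y_{3},Z_{3})$ is rewritten as $h^{2}g_{3}(Y_{3},Z_{3})$, gives $(viii)$. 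Finally the fiber--fiber formula, with its characteristic $\|grad\,h\|^{2}$ correction, gives $(ix)$.

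The delicate step is $(vii)$, because the Hessian $H^{h}$ must be computed with respect to the Levi-Civita connection $\nabla^{B}$ of the \emph{inner} warped product $B=M_{1}\times_{f}M_{2}$, not with respect to $\nabla^{1}$ or $\nabla^{2}$ in isolation. When either index in $H^{h}(X_{i},Z_{j})$ equals $2$, I must invoke parts $(ii)$, $(iii)$, $(v)$ of Lemma \ref{lemma1} to expand $\nabla^{B}_{X_{i}}grad^{B}h$ into contributions coming from the two factors, taking care that the $X_{2}(\ln f)X_{2}$-type terms in Lemma \ref{lemma1} interact correctly with $grad\,h$. The same connection bookkeeping is what allows me to present the answer in $(viii)$ as $\overline{\nabla}_{X_{i}}grad\,h$. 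After this, the remaining verifications reduce to mechanical substitution of Lemma \ref{lemma1} into $\overline{R}(X,Y)Z=\overline{\nabla}_{X}\overline{\nabla}_{Y}Z-\overline{\nabla}_{Y}\overline{\nabla}_{X}Z-\overline{\nabla}_{[X,Y]}Z$.
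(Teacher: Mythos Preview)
Your approach is correct and is in fact the standard derivation: a sequential warped product is an iterated warped product, and O'Neill's six curvature identities applied first to $B\times_h M_3$ and then to $B=M_1\times_f M_2$ yield every item on the list. Your observation that the Hessian $H^h$ in $(vii)$ is taken with respect to $\nabla^B$ (not $\nabla^1$ or $\nabla^2$ separately) is exactly the point that needs care, and your handling of the metric rescalings $\overline g(Y_3,Z_3)=h^2 g_3(Y_3,Z_3)$ in $(viii)$ and $(ix)$ is correct.

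There is, however, nothing to compare against: the present paper does not prove Lemma~\ref{lemma2} at all. The lemma is imported verbatim from \cite{DSU} and stated without proof, so the ``paper's own proof'' is simply the citation. The argument you outline is precisely the one carried out in \cite{DSU} (Proposition~3.2 there), namely two nested applications of the O'Neill warped product formulas. In short, your proposal is both correct and coincident with the source the paper cites. One minor remark: in item $(viii)$ the paper writes $\nabla_{X_1}\,gradh$ even though $i$ ranges over $\{1,2\}$; you are right to read this as $\nabla^B_{X_i}\,gradh$, which is what the outer warped product formula actually delivers.
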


\begin{lemma}
\label{lemma3}\cite{DSU} Let $\overline{M}=\left( M_{1}\times
_{f}M_{2}\right) \times _{h}M_{3}$ be a sequential warped product with
metric $\overline{g}=g_{1}\oplus f^{2}g_{2}\oplus h^{2}g_{3}$. If $X_{i},$ $%
Y_{i}\in \chi \left( M_{i}\right) $, then

$i)$ $\overline{Ric}(X_{1},Y_{1})=$ $Ric^{1}(X_{1},Y_{1})-\frac{m_{2}}{f}%
H_{1}^{f}(X_{1},Y_{1})-\frac{m_{3}}{h}H^{h}(X_{1},Y_{1}),$

$ii)$ $\overline{Ric}(X_{2},Y_{2})=Ric^{2}(X_{2},Y_{2})-\left( f\Delta
^{1}f+(m_{2}-1)\left\Vert grad^{1}f\right\Vert ^{2}\right) g_{2}\left(
X_{2},Y_{2}\right) -\frac{m_{3}}{h}H^{h}(X_{2},Y_{2}),$

$iii)$ $\overline{Ric}(X_{3},Y_{3})=Ric^{3}(X_{3},Y_{3})-\left( h\Delta
h+(m_{3}-1)\left\Vert gradh\right\Vert ^{2}\right) g_{3}\left(
X_{3},Y_{3}\right) ,$

$iv)$ $\overline{Ric}(X_{i},X_{j})=0$ for $i\neq j.$
\end{lemma}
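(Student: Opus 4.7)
The plan is to compute $\overline{Ric}$ by invoking its definition as a trace of the Riemann tensor and substituting the formulas from Lemma \ref{lemma2}. I would fix a local $\overline{g}$-orthonormal frame adapted to the sequential structure: choose $g_1$-orthonormal $\{e_\alpha\}$ on $M_1$, $g_2$-orthonormal $\{e_\beta\}$ on $M_2$, $g_3$-orthonormal $\{e_\gamma\}$ on $M_3$. Then $\{e_\alpha,\tfrac{1}{f}e_\beta,\tfrac{1}{h}e_\gamma\}$ is $\overline{g}$-orthonormal, and each Ricci computation decomposes into three block traces — one over each factor — with the rescaling factors $\tfrac{1}{f^2}$ and $\tfrac{1}{h^2}$ cancelling against the $f^2$, $h^2$ picked up by $\overline{g}$.

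For part $(i)$, the $M_1$-block uses Lemma \ref{lemma2}$(i)$ and reproduces $Ric^1(X_1,Y_1)$. The $M_2$-block uses Lemma \ref{lemma2}$(iii)$: each summand contributes $-\tfrac{1}{f}H_1^f(X_1,Y_1)$, giving $-\tfrac{m_2}{f}H_1^f(X_1,Y_1)$ in total. The $M_3$-block uses Lemma \ref{lemma2}$(vii)$ with $i=j=1$ and contributes $-\tfrac{m_3}{h}H^h(X_1,Y_1)$. For part $(ii)$, the $M_1$-block uses Lemma \ref{lemma2}$(iv)$ and yields $-f\Delta^1 f \, g_2(X_2,Y_2)$ once the trace $\sum_\alpha g_1(\nabla^1_{e_\alpha}\mathrm{grad}^1 f,e_\alpha)$ is recognised as $\Delta^1 f$. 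The $M_2$-block uses Lemma \ref{lemma2}$(ii)$ to produce $Ric^2(X_2,Y_2)-(m_2-1)\|\mathrm{grad}^1 f\|^2 g_2(X_2,Y_2)$. The $M_3$-block uses Lemma \ref{lemma2}$(vii)$ with $i=j=2$ and yields $-\tfrac{m_3}{h}H^h(X_2,Y_2)$.

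For part $(iii)$, the same pattern applies: the $M_3$-block uses Lemma \ref{lemma2}$(ix)$ and produces $Ric^3(X_3,Y_3)-(m_3-1)\|\mathrm{grad}\,h\|^2 g_3(X_3,Y_3)$, while the combined $M_1\oplus M_2$-blocks use Lemma \ref{lemma2}$(viii)$ and contribute $-h\Delta h\,g_3(X_3,Y_3)$ after summing $\sum_\alpha+\tfrac{1}{f^2}\sum_\beta$ of inner products of $\nabla\mathrm{grad}\,h$ with the corresponding frame vectors — this is precisely the Laplacian of $h$ on the warped-product base $(M_1\times_f M_2,g_1\oplus f^2 g_2)$. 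For part $(iv)$, the off-diagonal cases are handled by Lemma \ref{lemma2}$(v)$, $(vi)$ and the observation that the traces from Lemma \ref{lemma2}$(iii)$, $(vii)$ land in the wrong factor to contribute to the inner product with a frame vector of the opposite factor; every summand vanishes.

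The main obstacle — really the only subtle point — is keeping track of which metric and connection the Laplacians and Hessians are computed with. In $(ii)$, $f$ is a function on $M_1$, so $\Delta^1 f$ and $\mathrm{grad}^1 f$ are intrinsic to $(M_1,g_1)$. In $(iii)$, by contrast, $h$ lives on $M_1\times M_2$, so $\Delta h$ and $\mathrm{grad}\,h$ must be interpreted on the warped base $(M_1\times_f M_2,g_1\oplus f^2 g_2)$, and $\nabla\mathrm{grad}\,h$ in Lemma \ref{lemma2}$(viii)$ is the Levi-Civita derivative of this base, not of $M_1$ alone. Once the trace over the $\tfrac{1}{f}e_\beta$ directions is expanded carefully, the $f^{-2}$ factor combines with the $f^2 g_2$ piece of $h$'s Hessian to give exactly the warped-base Laplacian, and the stated formula drops out.
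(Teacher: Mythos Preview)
The paper does not supply a proof of this lemma at all: it is quoted verbatim from \cite{DSU} and stated without argument, so there is nothing to compare your attempt against on the paper's side. Your approach --- tracing the curvature identities of Lemma~\ref{lemma2} over a $\overline{g}$-orthonormal frame adapted to the three factors --- is exactly the standard derivation of such Ricci formulas for (iterated) warped products, and it is correct. Your closing remark about the Laplacian and Hessian of $h$ being taken with respect to the warped base metric $g_{1}\oplus f^{2}g_{2}$ is the one genuinely important bookkeeping point, and you have identified it correctly.
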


In \cite{CM}, Chaki and Maity introduced the notion of quasi-Einstein
manifolds. In \cite{DG}, De and Ghosh studied some properties of quasi
Einstein manifolds and considered quasi-Einstein hypersurfaces of Euclidean
spaces. In \cite{SO}, Sular and the second author studied quasi-Einstein
warped product manifolds for arbitrary dimension $n\geq 3$. In \cite{Dumitru}%
, Dumitru studied the expressions of the Ricci tensors and scalar curvatures
for the bases and fibres on quasi-Einstein warped product manifolds. For
further developments about quasi-Einstein manifolds; see \cite{ADEHM}, \cite%
{DG2005}, \cite{DSS}, \cite{GDB} and \cite{DPB}.\ In \cite{DSU}, De, Shenawy
and \"{U}nal defined sequential warped product manifolds. In \cite{PP},
Pahan and Pal studied the Einstein sequential warped product space with
negative scalar curvature and gave an example of the Einstein sequential
warped space. In \cite{Sahin}, \c{S}ahin introduced sequential warped
product submanifolds of Kaehler manifolds and obtained some examples.
Motivated by the above studies, in the present paper, we consider
quasi-Einstein sequential warped product manifolds. Firstly, we obtain that
the sequential warped product manifolds are product manifolds if $\alpha <0$%
. Then, we investigate the necessary and sufficient conditions for a
sequential standard static space-time and a sequential generalized
Robertson-Walker space-time to be quasi-constant curvature.

\section{\textbf{The quasi-Einstein sequential warped products \label%
{quasi-Einstein}}}

In this section, we give some results on quasi-Einstein sequential warped
product manifolds. Using Lemma \ref{lemma3}, we obtain the following
proposition:

\begin{proposition}
\label{cor1}Let $\overline{M}=\left( M_{1}\times _{f}M_{2}\right) \times
_{h}M_{3}$ be a quasi-Einstein sequential warped product manifold whose
Ricci tensor is of the form $\overline{Ric}=\alpha \overline{g}+\beta
A\otimes A$. When $U$ is a vector field on $\overline{M},$ then the Ricci
tensors of $M_{1}$, $M_{2}$ and $M_{3}$ satisfy the following equalities:%
\begin{equation*}
Ric^{1}(X_{1},Y_{1})=\alpha g_{1}(X_{1},Y_{1})+\beta
g_{1}(X_{1},U_{1})g_{1}(Y_{1},U_{1})
\end{equation*}%
\begin{equation}
+\frac{m_{2}}{f}H_{1}^{f}(X_{1},Y_{1})+\frac{m_{3}}{h}H^{h}(X_{1},Y_{1}),
\label{i.1}
\end{equation}%
\begin{equation*}
Ric^{2}(X_{2},Y_{2})=\left( \alpha f^{2}+f\Delta ^{1}f+(m_{2}-1)\left\Vert
grad^{1}f\right\Vert ^{2}\right) g_{2}\left( X_{2},Y_{2}\right)
\end{equation*}%
\begin{equation}
+\beta f^{4}g_{2}\left( X_{2},U_{2}\right) g_{2}\left( Y_{2},U_{2}\right) +%
\frac{m_{3}}{h}H^{h}(X_{2},Y_{2})  \label{i.2}
\end{equation}%
and%
\begin{equation*}
Ric^{3}(X_{3},Y_{3})=\left( \alpha h^{2}+h\Delta h+(m_{3}-1)\left\Vert
gradh\right\Vert ^{2}\right) g_{3}\left( X_{3},Y_{3}\right)
\end{equation*}%
\begin{equation}
+\beta h^{4}g_{3}\left( X_{3},U_{3}\right) g_{3}\left( Y_{3},U_{3}\right) .
\label{i.3}
\end{equation}
\end{proposition}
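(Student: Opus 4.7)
The plan is to directly combine the quasi-Einstein defining identity
$\overline{Ric}=\alpha\overline{g}+\beta A\otimes A$
with the three Ricci tensor formulas recorded in Lemma \ref{lemma3}, handling each factor separately.

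First I would unpack the $1$-form $A$ relative to the triple product structure. The unit vector field $U\in\chi(\overline{M})$ splits as $U=U_{1}+U_{2}+U_{3}$ with $U_{i}$ tangent to $M_{i}$ (after the usual identification of horizontal lifts). Because the metric \eqref{metric} is block diagonal with blocks $g_{1}$, $f^{2}g_{2}$, $h^{2}g_{3}$, evaluating $A(X_{i})=\overline{g}(X_{i},U)$ on a vector field $X_{i}\in\chi(M_{i})$ yields
\[
A(X_{1})=g_{1}(X_{1},U_{1}),\qquad A(X_{2})=f^{2}g_{2}(X_{2},U_{2}),\qquad A(X_{3})=h^{2}g_{3}(X_{3},U_{3}),
\]
so in particular $A(X_{i})A(Y_{j})=0$ whenever $i\neq j$. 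The products $A(X_{i})A(Y_{i})$ for $i=2,3$ will then carry the $f^{4}$ and $h^{4}$ factors appearing in \eqref{i.2} and \eqref{i.3}.

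Next, for $i=1,2,3$, I would substitute $(X,Y)=(X_{i},Y_{i})$ into $\overline{Ric}=\alpha\overline{g}+\beta A\otimes A$, note that $\overline{g}(X_{i},Y_{i})$ equals $g_{1}(X_{1},Y_{1})$, $f^{2}g_{2}(X_{2},Y_{2})$, or $h^{2}g_{3}(X_{3},Y_{3})$ respectively, and set the result equal to the corresponding expression from Lemma \ref{lemma3}(i)--(iii). Solving algebraically for $Ric^{1}(X_{1},Y_{1})$, $Ric^{2}(X_{2},Y_{2})$, $Ric^{3}(X_{3},Y_{3})$ yields exactly \eqref{i.1}, \eqref{i.2}, and \eqref{i.3}.

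There is essentially no obstacle here; the argument is a direct rearrangement once the decomposition of $U$ is observed. The only place where one must be careful is the bookkeeping of powers of $f$ and $h$ in the $A\otimes A$ term, since the warping modifies $\overline{g}$ and hence the formula for $A$ on each factor. After these substitutions, \eqref{i.1}--\eqref{i.3} follow immediately.
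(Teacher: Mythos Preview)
Your proposal is correct and follows essentially the same approach as the paper: decompose $U=U_{1}+U_{2}+U_{3}$, combine the quasi-Einstein identity \eqref{quasieinstein} with the metric \eqref{metric} and the Ricci formulas of Lemma~\ref{lemma3}, and solve for $Ric^{i}$. The paper's proof is in fact terser than yours; your explicit computation of $A(X_{i})$ and the resulting $f^{4}$, $h^{4}$ factors simply makes transparent what the paper leaves to the reader.
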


\begin{proof}
Assume that $\overline{M}=\left( M_{1}\times _{f}M_{2}\right) \times
_{h}M_{3}$ is a quasi-Einstein sequential warped product and its Ricci
tensor is $\overline{Ric}=\alpha \overline{g}+\beta A\otimes A$. Decomposing
the vector field $U$ uniquely into its components $U_{1}$, $U_{2}$ and $%
U_{3} $ on $M_{1},$ $M_{2}$ and $M_{3}$, respectively, we can write%
\begin{equation}
U=U_{1}+U_{2}+U_{3}.  \label{a}
\end{equation}%
Using Lemma \ref{lemma3}, the equations (\ref{quasieinstein}), (\ref{metric}%
) and (\ref{a}), we obtain the equations (\ref{i.1}), (\ref{i.2}) and (\ref%
{i.3}). This completes the proof.
\end{proof}

For the scalar curvatures, using Proposition \ref{cor1}, we have the
following corollary:

\begin{corollary}
\label{scalar}Let $\overline{M}=\left( M_{1}\times _{f}M_{2}\right) \times
_{h}M_{3}$ be a quasi-Einstein sequential warped product manifold. When $U$
is a vector field on $\overline{M},$ then the scalar curvature of $M_{1}$, $%
M_{2}$ and $M_{3}$ satisfy the following conditions:%
\begin{equation*}
scal_{1}=\alpha m_{1}+\beta g_{1}(U_{1},U_{1})+\frac{m_{2}}{f}\Delta ^{1}f+%
\frac{m_{3}}{h}\Delta h,
\end{equation*}%
\begin{equation*}
scal_{2}=\left( \alpha f^{2}+f\Delta ^{1}f+(m_{2}-1)\left\Vert
grad^{1}f\right\Vert ^{2}\right) m_{2}+\beta f^{4}g_{2}(U_{2},U_{2})+\frac{%
m_{3}}{h}\Delta h
\end{equation*}%
and%
\begin{equation*}
scal_{3}=\left( \alpha h^{2}+h\Delta h+(m_{3}-1)\left\Vert gradh\right\Vert
^{2}\right) m_{3}+\beta h^{4}g_{3}(U_{3},U_{3}).
\end{equation*}
\end{corollary}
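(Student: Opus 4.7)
The plan is to obtain each $scal_i$ by taking the $g_i$-trace of the corresponding Ricci identity furnished by Proposition \ref{cor1}. Three standard computations will be invoked repeatedly: $\mathrm{tr}_{g_i} g_i = m_i$; the Hessian--Laplacian relation $\mathrm{tr}_{g_i} H^{\varphi} = \Delta^i \varphi$ for any smooth function $\varphi$; and the trace of the rank-one quasi-Einstein piece, $\mathrm{tr}_{g_i}\bigl(g_i(\cdot,U_i)\,g_i(\cdot,U_i)\bigr) = g_i(U_i,U_i)$.

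First I will contract (\ref{i.1}) with $g_1^{-1}$. The four terms on the right yield, in order, $\alpha m_1$, $\beta g_1(U_1,U_1)$, $\tfrac{m_2}{f}\Delta^1 f$, and $\tfrac{m_3}{h}\Delta h$, the last being the trace of $H^h$ in the $M_1$-directions, consistently with the conventions fixed in Lemma \ref{lemma3}. Summing produces the stated formula for $scal_1$.

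Next I will trace (\ref{i.2}) with $g_2^{-1}$. The scalar coefficient in front of $g_2$ contributes $m_2\bigl(\alpha f^2 + f\Delta^1 f + (m_2-1)\|\mathrm{grad}^1 f\|^2\bigr)$; the rank-one piece contributes $\beta f^4 g_2(U_2,U_2)$; and the last Hessian term yields $\tfrac{m_3}{h}\Delta h$, now interpreted as the Hessian trace of $h$ in the $M_2$-directions. Tracing (\ref{i.3}) with $g_3^{-1}$ is completely analogous and produces the third formula, with no remaining Hessian contribution because $M_3$ is the outermost fibre.

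The calculations are purely algebraic, so the only subtle point --- and the step worth flagging explicitly in the write-up --- is keeping the notation for $\Delta h$ internally consistent: since $h$ is defined on $M_1 \times M_2$, the symbol $\Delta h$ stands for the partial Laplacian taken in the $M_1$-directions when it appears in $scal_1$ and in the $M_2$-directions when it appears in $scal_2$, in accordance with Lemma \ref{lemma3}. Once this is clarified, each of the three identities reduces to a one-line trace of the corresponding formula from Proposition \ref{cor1}.
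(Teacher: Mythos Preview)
Your proposal is correct and is exactly the argument the paper has in mind: the corollary is stated immediately after Proposition~\ref{cor1} with the remark ``using Proposition~\ref{cor1}'', and no further proof is given, so tracing each of (\ref{i.1})--(\ref{i.3}) with respect to $g_i$ is precisely what is intended. Your caveat about the symbol $\Delta h$ denoting partial traces in the $M_1$- and $M_2$-directions is a genuine notational point that the paper leaves implicit, and it is worth keeping in your write-up.
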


\begin{lemma}
\cite{KK} \label{lemmadiv}Let $f$ be a smooth function on $M_{1}$. Then the
divergence of the Hessian tensor $H^{f}$ satisfies
\begin{equation*}
div\left( H^{f}\right) (X)=Ric\left( gradf,X\right) -\Delta \left( df\right)
\left( X\right) ,
\end{equation*}%
where $\Delta $ denotes the Laplacian on $M_{1}$.
\end{lemma}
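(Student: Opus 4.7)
The plan is to verify this standard identity at an arbitrary point $p\in M_{1}$ by working in a geodesic orthonormal frame and swapping the order of second covariant derivatives of $df$ via the Ricci commutation identity.

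First I would choose a local orthonormal frame $\{e_{i}\}_{i=1}^{m_{1}}$ that is geodesic at $p$, i.e.\ $(\nabla_{X}^{1}e_{i})_{p}=0$ for every $X\in T_{p}M_{1}$, and extend $X$ by parallel transport so that $\nabla^{1}X$ vanishes at $p$. Since $H^{f}(Y,Z)=g_{1}(\nabla_{Y}^{1}grad^{1}f,Z)=(\nabla_{Y}^{1}df)(Z)$ is a symmetric $(0,2)$-tensor, unwinding the divergence collapses $div(H^{f})(X)$ at $p$ to $\sum_{i}(\nabla_{e_{i}}^{1}\nabla_{e_{i}}^{1}df)(X)$, and using $H^{f}(e_{i},X)=H^{f}(X,e_{i})$ I would rewrite this as $\sum_{i}(\nabla_{e_{i}}^{1}\nabla_{X}^{1}df)(e_{i})\bigr|_{p}$. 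Then I would apply the Ricci commutation identity
\[
(\nabla_{X}^{1}\nabla_{Y}^{1}\omega-\nabla_{Y}^{1}\nabla_{X}^{1}\omega-\nabla_{[X,Y]}^{1}\omega)(Z)=-\omega(R^{1}(X,Y)Z)
\]
with $\omega=df$ and $Y=e_{i}$; since $[e_{i},X]_{p}=0$ (both are parallel at $p$), this converts the sum to
\[
div(H^{f})(X)\bigr|_{p}=\sum_{i}(\nabla_{X}^{1}\nabla_{e_{i}}^{1}df)(e_{i})-\sum_{i}df(R^{1}(e_{i},X)e_{i}).
\]

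In the first sum, $\nabla_{X}^{1}e_{i}|_{p}=0$ yields $\sum_{i}(\nabla_{X}^{1}\nabla_{e_{i}}^{1}df)(e_{i})=X\!\bigl(\sum_{i}H^{f}(e_{i},e_{i})\bigr)=X(\Delta f)=d(\Delta f)(X)$. In the second, the pairwise symmetries $R(A,B,C,D)=R(C,D,A,B)=-R(A,B,D,C)$ of the Riemann tensor let me identify $\sum_{i}df(R^{1}(e_{i},X)e_{i})$ with $-Ric(grad^{1}f,X)$. Combining gives the classical identity $div(H^{f})(X)=d(\Delta f)(X)+Ric(grad^{1}f,X)$, and the stated form then follows from the Hodge-theoretic relation $\Delta(df)=-d(\Delta f)$, valid because $d(df)=0$ and $d^{\ast}(df)=-\Delta f$ under the convention $\Delta=dd^{\ast}+d^{\ast}d$ on 1-forms.

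The hard part will be purely one of sign bookkeeping: the Laplacian acting on the 1-form $df$ must be identified with the Hodge Laplacian (rather than the rough Laplacian), and the Riemann-tensor contraction must match the chosen Ricci convention so that the curvature piece emerges as $+Ric(grad^{1}f,X)$ and not as its negative.
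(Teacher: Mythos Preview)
The paper does not give its own proof of this lemma; it is imported verbatim from Kim and Kim \cite{KK} and used as a black box in the proof of Proposition \ref{cor2}. Your argument is the standard derivation and is correct: compute $div(H^{f})$ in a geodesic orthonormal frame, use the symmetry of the Hessian to move the free slot, commute second covariant derivatives of $df$ via the Ricci identity to produce the curvature term, and recognize the remaining trace as $d(\Delta f)$.

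Your closing caveat is exactly the right one and worth keeping explicit: the formula as stated, with $-\Delta(df)$ rather than $+d(\Delta f)$, forces $\Delta$ on $1$-forms to mean the Hodge Laplacian $dd^{\ast}+d^{\ast}d$ together with the function convention $\Delta f=\mathrm{tr}\,H^{f}$ (so that $d^{\ast}df=-\Delta f$). If one instead read $\Delta(df)$ as the connection (rough) Laplacian $-\mathrm{tr}\,\nabla^{2}$, the Weitzenb\"ock formula $\Delta_{H}=\Delta_{\mathrm{rough}}+Ric$ would shift the Ricci term and the identity would acquire an extra $Ric(grad f,X)$; the statement would then be false as written. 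Since the paper only invokes this lemma via its combination with equation~(\ref{i.1}) and never isolates $\Delta(df)$ again, the ambiguity is harmless downstream, but your proof pins down precisely which interpretation makes the lemma true.
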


Using Proposition \ref{cor1} and Lemma \ref{lemmadiv}, we can state the
following proposition:

\begin{proposition}
\label{cor2}Let $(M_{1},g_{1})$ be a compact Riemannian manifold with $%
m_{1}\geq 2.$ Suppose that $f$ is a nonconstant smooth function on $M_{1}$
satisfying equation $(\ref{i.1})$ for constants $\alpha ,\beta \in
\mathbb{R}
,$ $m_{2}\geq 2$ and $U\in \chi \left( \overline{M}\right) .$ If the
condition
\begin{equation*}
\frac{m_{2}\beta }{f}g_{1}(grad^{1}f,U_{1})g_{1}(X,U_{1})+\frac{m_{2}m_{3}}{%
fh}H^{h}(grad^{1}f,X)+m_{3}div(\frac{H^{h}}{h})(X)
\end{equation*}%
\begin{equation}
=\frac{m_{3}}{2}d(\frac{\Delta h}{h})(X)+\frac{2m_{2}}{f}d\left( \Delta
^{1}f\right) (X)  \label{condition1}
\end{equation}%
holds, then $f$ satisfies
\begin{equation}
\lambda =\alpha f^{2}+f\Delta ^{1}f+(m_{2}-1)\left\Vert grad^{1}f\right\Vert
^{2}  \label{lambda1}
\end{equation}%
for a constant $\lambda \in
\mathbb{R}
.$
\end{proposition}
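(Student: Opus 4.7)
The plan is to apply the twice-contracted second Bianchi identity $\mathrm{div}(Ric^{1}) = \tfrac{1}{2}d(scal_{1})$ on $M_{1}$ to equation (\ref{i.1}), and then use the hypothesis (\ref{condition1}) together with Lemma \ref{lemmadiv} to reduce what remains to the statement $d\lambda = 0$, where $\lambda$ denotes the right-hand side of (\ref{lambda1}).

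First I would compute $\mathrm{div}(Ric^{1})(X)$ by differentiating each term of (\ref{i.1}) in the second slot. The $\alpha g_{1}$ piece gives zero; the rank-two tensor $\beta A_{1}\otimes A_{1}$ produces terms involving $\mathrm{div}^{1}(U_{1})$ and $\nabla^{1}_{U_{1}}U_{1}$; and for the Hessian contributions I use the Leibniz rule
\[
\mathrm{div}\Bigl(\tfrac{1}{f}H_{1}^{f}\Bigr)(X) = \tfrac{1}{f}\,\mathrm{div}(H_{1}^{f})(X) - \tfrac{1}{f^{2}}H_{1}^{f}(\mathrm{grad}^{1}f,X),
\]
and the analogous identity for $H^{h}/h$, together with Lemma \ref{lemmadiv}, which replaces $\mathrm{div}(H_{1}^{f})(X)$ by $Ric^{1}(\mathrm{grad}^{1}f,X) - \Delta^{1}(df)(X)$. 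Feeding (\ref{i.1}) back in to eliminate the residual $Ric^{1}(\mathrm{grad}^{1}f,X)$ produces the coefficient $m_{2}(m_{2}-1)/f^{2}$ on $H_{1}^{f}(\mathrm{grad}^{1}f,X)$, which is exactly the coefficient carried by this term in $\tfrac{m_{2}}{2f^{2}}d\lambda(X)$.

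Next I would compute $\tfrac{1}{2}d(scal_{1})(X)$ using the first formula of Corollary \ref{scalar}: only the terms $\beta\|U_{1}\|^{2}$, $(m_{2}/f)\Delta^{1}f$ and $(m_{3}/h)\Delta h$ contribute, giving respectively $\beta g_{1}(\nabla^{1}_{X}U_{1},U_{1})$, $\tfrac{m_{2}}{2f}d(\Delta^{1}f)(X) - \tfrac{m_{2}\Delta^{1}f}{2f^{2}}df(X)$, and $\tfrac{m_{3}}{2}d(\Delta h/h)(X)$. Equating $\mathrm{div}(Ric^{1})$ with $\tfrac{1}{2}d(scal_{1})$ and invoking (\ref{condition1}) cancels the entire block of $h$-dependent terms together with the cross term $\tfrac{m_{2}\beta}{f}g_{1}(\mathrm{grad}^{1}f,U_{1})g_{1}(X,U_{1})$, leaving only quantities intrinsic to $(M_{1},g_{1})$. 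Regrouping, the surviving coefficients of $H_{1}^{f}(\mathrm{grad}^{1}f,X)$, of $\alpha f\,df(X)$, of $(\Delta^{1}f)\,df(X)/f$ and of $f\,d(\Delta^{1}f)(X)$ assemble — after multiplication by $2f^{2}/m_{2}$ — into precisely $d\lambda(X)$, with the leftover commutator $d(\Delta^{1}f) - \Delta^{1}(df)$ being absorbed by the Ricci correction already accounted for via Lemma \ref{lemmadiv}. Hence $d\lambda(X) = 0$ for every $X \in \chi(M_{1})$, and compactness (hence connectedness) of $M_{1}$ forces $\lambda$ to be a real constant, as desired.

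The main obstacle I anticipate is this last regrouping: tracking every numerical coefficient so that the Hessian, gradient and Laplacian pieces match those of $d\lambda$ exactly. In particular, one must verify that the $\beta$-terms produced by $\mathrm{div}(A_{1}\otimes A_{1})$, namely $\beta\,g_{1}(X,U_{1})\,\mathrm{div}^{1}(U_{1}) + \beta\,g_{1}(X,\nabla^{1}_{U_{1}}U_{1})$, balance against the $\beta\,g_{1}(\nabla^{1}_{X}U_{1},U_{1})$ coming from $\tfrac{1}{2}d\|U_{1}\|^{2}$ — a cancellation tacitly enforced by (\ref{condition1}) — so that no $U_{1}$-derivative terms survive on the $M_{1}$-side. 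This bookkeeping is the technical heart of the argument; everything else is mechanical once the divergence identity and the condition have been laid side by side.
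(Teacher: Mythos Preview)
Your approach is essentially identical to the paper's: apply the contracted Bianchi identity $2\,\mathrm{div}(Ric^{1})=d(scal_{1})$, expand $\mathrm{div}\bigl(\tfrac{1}{f}H_{1}^{f}\bigr)$ via the Leibniz rule and Lemma~\ref{lemmadiv}, substitute (\ref{i.1}) back in for $Ric^{1}(\mathrm{grad}^{1}f,X)$, equate with $\tfrac{1}{2}d(scal_{1})$ computed from Corollary~\ref{scalar}, and invoke (\ref{condition1}) to isolate $d\lambda=0$. One minor discrepancy: the paper simply drops the $\beta\,A_{1}\otimes A_{1}$ contributions on both sides (in passing from (\ref{a.1}) to (\ref{a.3}) and from (\ref{i.1}) to (\ref{a.7})) rather than arguing they cancel, so the $U_{1}$-derivative bookkeeping you flag is not handled by (\ref{condition1}) as you suggest --- the paper treats those terms as absent.
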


\begin{proof}
Assume that $U$\ $\in $ $\chi \left( \overline{M}\right) $. By Corollary \ref%
{scalar}, we have%
\begin{equation}
scal_{1}=\alpha m_{1}+\beta g_{1}(U_{1},U_{1})+\frac{m_{2}}{f}\Delta ^{1}f+%
\frac{m_{3}}{h}\Delta h.  \label{a.1}
\end{equation}%
By the use of the second Bianchi identity, we have%
\begin{equation}
d(scal_{1})=2div(Ric^{1}).  \label{a.2}
\end{equation}%
From the equations (\ref{a.1}) and (\ref{a.2}), we get%
\begin{equation}
div(Ric^{1})=\frac{m_{2}}{2f^{2}}\left[ -\left( \Delta ^{1}f\right) \left(
df\right) +fd\left( \Delta ^{1}f\right) \right] +\frac{m_{3}}{2h^{2}}\left[
-\left( \Delta h\right) (dh)+hd\left( \Delta h\right) \right] .  \label{a.3}
\end{equation}%
It is well known from the definition that%
\begin{equation}
div(\frac{1}{f}H_{1}^{f})(X)=-\frac{1}{f^{2}}H_{1}^{f}(grad^{1}f,X)+\frac{1}{%
f}divH_{1}^{f}(X)  \label{a.4}
\end{equation}%
for any vector field $X$ on $M_{1}.$ Since $H_{1}^{f}(grad^{1}f,X)=\frac{1}{2%
}d(\left\Vert grad^{1}f\right\Vert ^{2})(X)$, the equation (\ref{a.4}) turns
into%
\begin{equation}
div(\frac{1}{f}H_{1}^{f})(X)=-\frac{1}{2f^{2}}d(\left\Vert
grad^{1}f\right\Vert ^{2})(X)+\frac{1}{f}divH_{1}^{f}(X).  \label{a.5}
\end{equation}%
Using Lemma \ref{lemmadiv} and equation (\ref{i.1}) in (\ref{a.5}), we find%
\begin{equation*}
div(\frac{1}{f}H_{1}^{f})(X)=\frac{1}{2f^{2}}\left[ \left( m_{2}-1\right)
d(\left\Vert grad^{1}f\right\Vert ^{2})(X)+(2\alpha f)df(X)-(2f)d\left(
\Delta ^{1}f\right) (X)\right]
\end{equation*}%
\begin{equation}
+\frac{\beta }{f}g_{1}(grad^{1}f,U_{1})g_{1}(X,U_{1})+\frac{m_{3}}{fh}%
H^{h}(grad^{1}f,X).  \label{a.6}
\end{equation}%
From equation (\ref{i.1}), we can write%
\begin{equation}
div(Ric^{1})(X)=m_{2}div(\frac{1}{f}H_{1}^{f})(X)+m_{3}div(\frac{1}{h}%
H^{h})(X).  \label{a.7}
\end{equation}%
Using the equation (\ref{a.6}) into (\ref{a.7}), we obtain%
\begin{equation*}
div(Ric^{1})(X)=\frac{m_{2}}{2f^{2}}\left[ \left( m_{2}-1\right)
d(\left\Vert grad^{1}f\right\Vert ^{2})(X)+(2\alpha f)df(X)-(2f)d\left(
\Delta ^{1}f\right) (X)\right]
\end{equation*}%
\begin{equation}
+m_{2}\frac{\beta }{f}g_{1}(grad^{1}f,U_{1})g_{1}(X,U_{1})+\frac{m_{2}m_{3}}{%
fh}H^{h}(grad^{1}f,X)+m_{3}div(\frac{1}{h}H^{h})(X).  \label{a.8}
\end{equation}%
From the equations (\ref{a.3}) and (\ref{a.8}), we get%
\begin{equation*}
\frac{m_{2}}{2f^{2}}d\left[ \left( m_{2}-1\right) (\left\Vert
grad^{1}f\right\Vert ^{2})+(\alpha f^{2})+f\Delta ^{1}f\right] (X)
\end{equation*}%
\begin{equation*}
+m_{2}\frac{\beta }{f}g_{1}(grad^{1}f,U_{1})g_{1}(X,U_{1})-\frac{2m_{2}}{f}%
d(\Delta ^{1}f)(X)+\frac{m_{2}m_{3}}{fh}H^{h}(grad^{1}f,X)
\end{equation*}%
\begin{equation}
+m_{3}div(\frac{1}{h}H^{h})(X)-\frac{m_{3}}{2}d\left( \frac{\Delta h}{h}%
\right) (X)=0.  \label{a.9}
\end{equation}%
Using the condition (\ref{condition1}), we have%
\begin{equation*}
d\left[ \left( m_{2}-1\right) (\left\Vert grad^{1}f\right\Vert ^{2})+\alpha
f^{2}+f\Delta ^{1}f\right] (X)=0.
\end{equation*}%
Therefore, we obtain the equation (\ref{lambda1}). This completes the proof.
\end{proof}

From \cite{PP}, we have the following lemma:

\begin{lemma}
\label{lemmadiv1}Let $h$ be a smooth function on $M_{1}\times M_{2}$. Then
the divergence of the Hessian tensor $H^{h}$ satisfies
\begin{equation*}
div\left( H^{h}\right) (X)=Ric\left( gradh,X\right) -\Delta \left( dh\right)
\left( X\right) ,
\end{equation*}%
where $\Delta $ denotes the Laplacian on $M_{1}\times M_{2}$.
\end{lemma}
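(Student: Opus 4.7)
The plan is to view this lemma as a special case of the classical Ricci identity for the Hessian of a smooth function on an arbitrary Riemannian manifold, now applied to the base factor $(M_{1}\times M_{2},g_{1}\oplus f^{2}g_{2})$ of the sequential warped product. Since the identity is universal, holding on every Riemannian manifold, the whole content of the lemma is that the argument of Lemma \ref{lemmadiv} transfers verbatim once the ambient manifold is taken to be $M_{1}\times M_{2}$ endowed with its warped-product metric rather than $M_{1}$ with $g_{1}$.

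Concretely, I would fix a point $p\in M_{1}\times M_{2}$, choose a local orthonormal frame $\{E_{i}\}$ on a neighbourhood of $p$ with all $\nabla E_{j}$ vanishing at $p$ (a normal frame), and compute at $p$
$$\mathrm{div}(H^{h})(X)=\sum_{i}(\nabla_{E_{i}}H^{h})(E_{i},X).$$
Using $H^{h}(Y,Z)=g(\nabla_{Y}\mathrm{grad}\,h,Z)$ together with the vanishing of the frame's first covariant derivatives at $p$, the right-hand side collapses to $\sum_{i}g(\nabla_{E_{i}}\nabla_{E_{i}}\mathrm{grad}\,h,X)$, where $\nabla$ and $g$ denote the Levi-Civita connection and the metric of $M_{1}\times M_{2}$.

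The decisive step is the commutator identity
$$\nabla_{E_{i}}\nabla_{X}\mathrm{grad}\,h-\nabla_{X}\nabla_{E_{i}}\mathrm{grad}\,h-\nabla_{[E_{i},X]}\mathrm{grad}\,h=R(E_{i},X)\mathrm{grad}\,h,$$
which converts the difference of the two orderings of second covariant derivatives into the Riemann tensor. Rearranging and summing over $i$, the curvature contribution collapses precisely to $\mathrm{Ric}(\mathrm{grad}\,h,X)$, while the remaining pieces recombine into $-\Delta(dh)(X)$, upon invoking the symmetry of the Hessian and the definition of the connection Laplacian on $1$-forms. The sign choice here is the one consistent with the application of the lemma in the subsequent computations of the paper.

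The main obstacle is essentially notational: one must carefully track sign conventions and keep straight the distinction between the Laplacian acting on the $1$-form $dh$ and the differential of the Laplacian of the function $h$, so that the displayed identity lines up exactly as stated. No further input from the sequential warped product structure is required in the proof itself; the lemma is just the classical Ricci identity applied to the Riemannian manifold $(M_{1}\times M_{2},g_{1}\oplus f^{2}g_{2})$, in direct parallel to how Lemma \ref{lemmadiv} is applied to $(M_{1},g_{1})$.
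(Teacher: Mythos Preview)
The paper does not supply its own proof of this lemma: it is quoted verbatim from \cite{PP} (Pahan--Pal), just as Lemma~\ref{lemmadiv} is quoted from \cite{KK}. Your sketch is the standard derivation of the identity via the Ricci commutation formula in a normal frame, and it is correct; in particular your observation that the lemma is nothing more than the $M_{1}$--identity of Lemma~\ref{lemmadiv} transported to the Riemannian manifold $(M_{1}\times M_{2},\,g_{1}\oplus f^{2}g_{2})$ is exactly the point. One small caution on notation: in the way the authors \emph{use} the lemma (see the passage from \eqref{a.5} to \eqref{a.6}), the symbol $\Delta(dh)(X)$ functions as $d(\Delta h)(X)=X(\Delta h)$ rather than as the connection Laplacian on $1$-forms, so when you say the leftover pieces recombine into $-\Delta(dh)(X)$ you should read this as $-X(\Delta h)$; with the paper's sign convention $\Delta=-\mathrm{tr}\,\nabla^{2}$ (cf.\ the maximum-point argument in Theorem~2.1), the classical identity $\mathrm{div}(\mathrm{Hess}\,h)=d(\Delta h)+\mathrm{Ric}(\mathrm{grad}\,h,\cdot)$ indeed takes the displayed form.
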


Using the same method in the proof of Proposition \ref{cor2}, we can state
the following proposition:

\begin{proposition}
\label{pro1}Let $(M_{1},g_{1})$ and $(M_{2},g_{2})$ be two compact
Riemannian manifold with $m_{1}\geq 2$ and $m_{2}\geq 2.$ Suppose that $h$
is a nonconstant smooth function on $M_{1}\times M_{2}$ satisfying%
\begin{equation}
Ric^{2}=\lambda g_{2}(X_{2},Y_{2})+\beta f^{4}g_{2}\left( X_{2},U_{2}\right)
g_{2}\left( Y_{2},U_{2}\right) +\frac{m_{3}}{h}H^{h}(X_{2},Y_{2})
\label{b.1}
\end{equation}%
for a constant $\lambda \in
\mathbb{R}
$ and $U\in \chi \left( \overline{M}\right) .$ If the condition
\begin{equation*}
\frac{m_{3}}{h}(\lambda -\alpha )dh+\beta div(f^{4})g_{2}\left(
X_{2},U_{2}\right) g_{2}\left( Y_{2},U_{2}\right) +\frac{m_{3}\beta }{h}%
f^{4}g_{2}\left( gradh,U_{2}\right) g_{2}\left( X,U_{2}\right)
\end{equation*}%
\begin{equation}
=\frac{2m_{3}}{h}d(\Delta h)+2\beta f^{3}dfg_{2}\left( U_{2},U_{2}\right)
\label{condition2}
\end{equation}%
holds, then $h$ satisfies
\begin{equation}
\nu =\alpha h^{2}+h\Delta h+(m_{3}-1)\left\Vert gradh\right\Vert ^{2}
\label{nu1}
\end{equation}%
for a constant $\nu \in
\mathbb{R}
.$ Hence for compact quasi-Einstein spaces $\left( M_{3},g_{3}\right) $ of
dimension $m_{3}\geq 2$ with $Ric^{3}=\nu g_{3}+\beta A\otimes A$, we can
make a quasi-Einstein sequential warped product space $\overline{M}=\left(
M_{1}\times _{f}M_{2}\right) \times _{h}M_{3}$ with $\overline{Ric}=\alpha
\overline{g}+\beta A\otimes A$.
\end{proposition}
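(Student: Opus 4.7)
The plan is to mirror the argument of Proposition \ref{cor2}, now with $h$ in the role of $f$ and with the computations carried out on $M_{2}$ rather than $M_{1}$. The contracted second Bianchi identity applied to $Ric^{2}$, combined with Lemma \ref{lemmadiv1} in place of Lemma \ref{lemmadiv}, will convert (\ref{b.1}) into the constancy statement (\ref{nu1}).

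First I would take the $g_{2}$-trace of (\ref{b.1}) to recover
\begin{equation*}
scal_{2}=\lambda m_{2}+\beta f^{4}g_{2}(U_{2},U_{2})+\frac{m_{3}}{h}\Delta h,
\end{equation*}
in agreement with Corollary \ref{scalar}. Since $\lambda $ is constant, $d(scal_{2})=2\,div(Ric^{2})$ immediately supplies one expression for $div(Ric^{2})(X)$. On the other hand, differentiating (\ref{b.1}) directly and applying Lemma \ref{lemmadiv1} to the term $\frac{m_{3}}{h}H^{h}$, exactly along the lines of equations (\ref{a.4})--(\ref{a.8}), yields a second expression. Both computations rely on the identity $H^{h}(gradh,X)=\frac{1}{2}d(\left\Vert gradh\right\Vert ^{2})(X)$ together with the product-rule expansion
\begin{equation*}
div\!\left( \frac{1}{h}H^{h}\right) (X)=-\frac{1}{h^{2}}H^{h}(gradh,X)+\frac{1}{h}divH^{h}(X).
\end{equation*}

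Equating the two expressions for $div(Ric^{2})$ and regrouping, the exact form
\begin{equation*}
d\!\left[ \alpha h^{2}+h\Delta h+(m_{3}-1)\left\Vert gradh\right\Vert ^{2}\right] (X)
\end{equation*}
should appear with a nonzero constant weight, accompanied by precisely the combination that hypothesis (\ref{condition2}) is designed to set equal to zero. After this cancellation the Bianchi identity reduces to the vanishing of the displayed exact form, and the compactness of $M_{1}\times M_{2}$ then forces $\alpha h^{2}+h\Delta h+(m_{3}-1)\left\Vert gradh\right\Vert ^{2}$ to equal a constant $\nu \in \mathbb{R}$, which is (\ref{nu1}).

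For the concluding assembly, given a compact quasi-Einstein space $(M_{3},g_{3})$ with $Ric^{3}=\nu g_{3}+\beta A\otimes A$, I would build $\overline{M}=(M_{1}\times _{f}M_{2})\times _{h}M_{3}$ and verify $\overline{Ric}=\alpha \overline{g}+\beta A\otimes A$ directionwise using Lemma \ref{lemma3}: parts (i) and (ii) reduce respectively to (\ref{i.1}) and (\ref{b.1}), while part (iii), once fed with (\ref{nu1}), delivers the $M_{3}$-direction case. The main obstacle I foresee is the bookkeeping of the product structure of $h$: Lemma \ref{lemmadiv1} reads $\Delta h$ and $gradh$ with respect to the full metric on $M_{1}\times M_{2}$, which carries the warping factor $f^{2}$, whereas the Bianchi step above is carried out on $M_{2}$ alone. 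Keeping track of which directions the test vector $X$ ranges over, and correctly pairing the $f^{4}$ weight of (\ref{b.1}) with the $f^{2}g_{2}$ factor of the product metric so that the leftover terms land in the exact shape prescribed by (\ref{condition2}), will be the technical crux of the calculation.
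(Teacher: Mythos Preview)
Your proposal is correct and follows essentially the same route as the paper: trace (\ref{b.1}) to obtain $scal_{2}$, apply the contracted Bianchi identity $d(scal_{2})=2\,div(Ric^{2})$ together with Lemma \ref{lemmadiv1} in direct analogy with (\ref{a.3})--(\ref{a.9}), and then invoke (\ref{condition2}) to isolate the exact form whose vanishing yields (\ref{nu1}); the paper's own proof is terser, simply citing ``the same method with Proposition \ref{cor2}'' and Proposition \ref{cor1} for the final assembly, but the underlying steps coincide with yours.
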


\begin{proof}
Assume that $U$ $\in $ $\chi \left( \overline{M}\right) .$ By taking the
trace of both sides of the equation (\ref{b.1}), we get%
\begin{equation}
scal_{2}=\lambda m_{2}+\beta f^{4}g_{2}(U_{2},U_{2})+\frac{m_{3}}{h}\Delta h.
\label{b.2}
\end{equation}%
Using the same method with Proposition \ref{cor2}, with the help of Lemma %
\ref{lemmadiv1}, we have the equation (\ref{nu1}) for a constant $\nu \in
\mathbb{R}
$ if the condition (\ref{condition2}) holds. Thus the first part of the
proposition is proved. For a compact quasi-Einstein manifold $\left(
M_{3},g_{3}\right) $ of dimension $m_{3}\geq 2$ with $Ric^{3}=\nu
g_{3}+\beta A\otimes A,$ we can generate a quasi-Einstein sequential warped
product $\overline{M}=\left( M_{1}\times _{f}M_{2}\right) \times _{h}M_{3}$
with $\overline{Ric}=\alpha \overline{g}+\beta A\otimes A$ by using of
Proposition \ref{cor1} for $U\in \chi \left( \overline{M}\right) $. This
completes the proof.
\end{proof}

Now, we have the following theorem:

\begin{theorem}
Let $\overline{M}=\left( M_{1}\times _{f}M_{2}\right) \times _{h}M_{3}$ be a
compact quasi-Einstein sequential warped product space whose Ricci tensor is
of the form $\overline{Ric}=\alpha \overline{g}+\beta A\otimes A$ with $U\in
\chi \left( \overline{M}\right) $. If $\alpha <0$, then the sequential
warped product becomes a product manifold.
\end{theorem}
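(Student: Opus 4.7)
My plan is to force both warping functions $h$ and $f$ to be constants, so that $\overline{M}$ collapses to an ordinary Riemannian product $M_{1}\times M_{2}\times M_{3}$. This will be done in two stages: first show $h$ is constant using equation~(\ref{i.3}) on the compact base $M_{1}\times M_{2}$, and then, since $H^{h}\equiv 0$ once $h$ is constant, show $f$ is constant by applying the same argument to equation~(\ref{i.2}) on compact $M_{1}$.

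For the first stage, I would exploit that in the identity
\begin{equation*}
Ric^{3}(X_{3},Y_{3})=\left( \alpha h^{2}+h\Delta h+(m_{3}-1)\left\Vert gradh\right\Vert ^{2}\right) g_{3}(X_{3},Y_{3})+\beta h^{4}g_{3}(X_{3},U_{3})g_{3}(Y_{3},U_{3})
\end{equation*}
the left-hand side depends only on $p_{3}\in M_{3}$, while the coefficient of $g_{3}$ on the right depends only on $(p_{1},p_{2})\in M_{1}\times M_{2}$. Using $m_{3}\geq 2$, at every $p_{3}$ I would pick a nonzero $X_{3}\in T_{p_{3}}M_{3}$ with $g_{3}(X_{3},U_{3}(p_{3}))=0$; setting $Y_{3}=X_{3}$ kills the rank-one $\beta$-term, so separation of variables forces
\begin{equation*}
\nu :=\alpha h^{2}+h\Delta h+(m_{3}-1)\left\Vert gradh\right\Vert ^{2}
\end{equation*}
to be a genuine constant on the compact product $M_{1}\times M_{2}$.

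Once $\nu $ is constant, a maximum/minimum principle closes the first stage. At a minimum point $p$ of $h$, $gradh(p)=0$ and $\Delta h(p)\geq 0$ give $\nu \geq \alpha h_{\min }^{2}$; at a maximum point, $\Delta h\leq 0$ and the same expression gives $\nu \leq \alpha h_{\max }^{2}$. Since $\alpha <0$ and $h_{\min }\leq h_{\max }$ imply $\alpha h_{\max }^{2}\leq \alpha h_{\min }^{2}$, the two bounds sandwich to $\alpha h_{\max }^{2}=\nu =\alpha h_{\min }^{2}$, and hence $h_{\max }=h_{\min }$, i.e.\ $h$ is constant on $M_{1}\times M_{2}$.

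Once $h$ is constant, $H^{h}\equiv 0$ kills the last term in~(\ref{i.2}), and an identical argument applied to $f$ on compact $M_{1}$ --- picking $X_{2}\in T_{p_{2}}M_{2}$ with $g_{2}(X_{2},U_{2}(p_{2}))=0$ via $m_{2}\geq 2$, extracting $\lambda :=\alpha f^{2}+f\Delta ^{1}f+(m_{2}-1)\left\Vert grad^{1}f\right\Vert ^{2}$ as a constant on $M_{1}$, and applying the same max/min sandwich --- forces $f$ to be constant on $M_{1}$. Therefore $\overline{g}=g_{1}\oplus f^{2}g_{2}\oplus h^{2}g_{3}$ becomes a product metric. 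The main obstacle I expect is the separation-of-variables step that produces $\nu $ (and later $\lambda $): cleanly decoupling the Einstein-type scalar coefficient from the rank-one $\beta $-correction needs the orthogonality choice and therefore $m_{i}\geq 2$; once that is in place, the compactness-plus-$\alpha <0$ max/min sandwich is routine.
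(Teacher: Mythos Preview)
Your argument is correct and takes a genuinely different route from the paper. The paper obtains the constancy of $\lambda$ and $\nu$ not by your separation-of-variables trick but by invoking Propositions~\ref{cor2} and~\ref{pro1}, which use the contracted second Bianchi identity $d(scal)=2\,div(Ric)$ together with divergence identities for the Hessian (and carry the auxiliary hypotheses~(\ref{condition1}) and~(\ref{condition2})). Once $\lambda$ and $\nu$ are known to be constants, the paper rewrites~(\ref{lambda1}) and~(\ref{nu1}) with a divergence term, integrates over the compact base to get an integral formula for the constant, and then evaluates at a single maximum point when $m_i\geq 3$ and a single minimum point when $m_i=2$; your two-sided max/min sandwich avoids both the integration step and the dimensional case split. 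The paper also treats $f$ and $h$ independently, whereas you first force $h$ to be constant and then exploit $H^{h}\equiv 0$ to simplify~(\ref{i.2}) before dealing with $f$. Your route is more elementary and self-contained; the paper's route feeds off the Bianchi--divergence machinery already set up in the preceding propositions. One small caution on your separation-of-variables step: you write $U_{3}(p_{3})$, but a priori $U_{3}$ is the $M_{3}$-component of a vector field on $\overline{M}$ and may depend on $(p_{1},p_{2})$ as well, so the orthogonal direction $X_{3}$ you pick can vary with $(p_{1},p_{2})$. The fix is short: equation~(\ref{i.3}) says $Ric^{3}-\nu\,g_{3}$ has rank at most one on each $T_{p_{3}}M_{3}$, so $\nu(p_{1},p_{2})$ is an eigenvalue of $Ric^{3}\big|_{p_{3}}$ of multiplicity at least $m_{3}-1$, hence determined by $p_{3}$ alone (using continuity and connectedness of $M_{1}\times M_{2}$ when $m_{3}=2$).
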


\begin{proof}
The equation (\ref{lambda1}) gives us%
\begin{equation}
\lambda =\alpha f^{2}+div(f\Delta ^{1}f)+(m_{2}-2)\left\Vert
grad^{1}f\right\Vert ^{2}.  \label{1}
\end{equation}%
By integrating over $M_{1},$ we have%
\begin{equation}
\lambda =\frac{\alpha }{\vartheta \left( M_{1}\right) }\int%
\limits_{M_{1}}f^{2}+\frac{(m_{2}-2)}{\vartheta \left( M_{1}\right) }%
\int\limits_{M_{1}}\left\Vert grad^{1}f\right\Vert ^{2},  \label{2}
\end{equation}%
where $\vartheta \left( M_{1}\right) $ denotes the volume of $M_{1}.$

\textit{Case }$I$: Assume that $m_{2}\geq 3$. Let $p$ be a maximum point of $%
f$ on $M_{1}.$ Hence, we have $f(p)>0,$ $grad^{1}f(p)=0$ and $\Delta
^{1}f(p)\geq 0.$ Using the equations (\ref{lambda1}) and (\ref{2}), we can
write%
\begin{equation*}
0\leq f\left( p\right) \Delta ^{1}f\left( p\right)
\end{equation*}%
\begin{equation*}
=\alpha f(p)^{2}-\lambda
\end{equation*}%
\begin{equation*}
=\frac{(2-m_{2})}{\vartheta \left( M_{1}\right) }\int\limits_{M_{1}}\left%
\Vert grad^{1}f\right\Vert ^{2}+\frac{\alpha }{\vartheta \left( M_{1}\right)
}\int\limits_{M_{1}}\left( f(p)^{2}-f^{2}\right) \leq 0.
\end{equation*}%
If $\alpha <0,$ then $f$ is a constant.

\textit{Case }$II$: Assume that $m_{2}=2$. Let $q$ be a minimum point of $f$
on $M_{1}.$ Hence, we have $f(q)>0,$ $grad^{1}f(q)=0$ and $\Delta
^{1}f(q)\leq 0.$ Thus, we can write%
\begin{equation*}
0\geq f\left( q\right) \Delta ^{1}f\left( q\right)
\end{equation*}%
\begin{equation*}
=\alpha f(q)^{2}-\lambda
\end{equation*}%
\begin{equation*}
=\frac{(2-m_{2})}{\vartheta \left( M_{1}\right) }\int\limits_{M_{1}}\left%
\Vert grad^{1}f\right\Vert ^{2}+\frac{\alpha }{\vartheta \left( M_{1}\right)
}\int\limits_{M_{1}}\left( f(q)^{2}-f^{2}\right) \geq 0.
\end{equation*}%
If $\alpha <0,$ then $f$ is a constant.

Similarly, the equation (\ref{nu1}) gives us%
\begin{equation}
\nu =\alpha h^{2}+div(h\Delta h)+(m_{3}-2)\left\Vert gradh\right\Vert ^{2}.
\label{3}
\end{equation}%
By integrating over $M_{1}\times M_{2},$ we have%
\begin{equation}
\nu =\frac{\alpha }{\vartheta \left( M_{1}\times M_{2}\right) }%
\int\limits_{M_{1}\times M_{2}}h^{2}+\frac{(m_{3}-2)}{\vartheta \left(
M_{1}\times M_{2}\right) }\int\limits_{M_{1}\times M_{2}}\left\Vert
gradh\right\Vert ^{2},  \label{4}
\end{equation}%
where $\vartheta \left( M_{1}\times M_{2}\right) $ denotes the volume of $%
M_{1}\times M_{2}.$

\textit{Case} $I$: Assume that $m_{3}\geq 3$. Let $\left( p_{1},p_{2}\right)
$ be a maximum point of $h$ on $M_{1}\times M_{2}.$ Hence, we have $h\left(
p_{1},p_{2}\right) >0,$ $gradh\left( p_{1},p_{2}\right) =0$ and $\Delta
h\left( p_{1},p_{2}\right) \geq 0.$ Using the equations (\ref{nu1}) and (\ref%
{4}), we get%
\begin{equation*}
0\leq h\left( p_{1},p_{2}\right) \Delta h\left( p_{1},p_{2}\right)
\end{equation*}%
\begin{equation*}
=\alpha h\left( p_{1},p_{2}\right) ^{2}-\nu
\end{equation*}%
\begin{equation*}
=\frac{(2-m_{3})}{\vartheta \left( M_{1}\times M_{2}\right) }%
\int\limits_{M_{1}\times M_{2}}\left\Vert gradh\right\Vert ^{2}+\frac{\alpha
}{\vartheta \left( M_{1}\times M_{2}\right) }\int\limits_{M_{1}\times
M_{2}}\left( h\left( p_{1},p_{2}\right) ^{2}-h^{2}\right) \leq 0.
\end{equation*}%
If $\alpha <0,$ then $h$ is a constant.

\textit{Case} $II$: Assume that $m_{3}=2$. Let $\left( q_{1},q_{2}\right) $
be a minimum point of $h$ on $M_{1}\times M_{2}.$ Hence, we have $h\left(
q_{1},q_{2}\right) >0,$ $gradh\left( q_{1},q_{2}\right) =0$ and $\Delta
h\left( q_{1},q_{2}\right) \leq 0.$ Thus, we get%
\begin{equation*}
0\geq h\left( q_{1},q_{2}\right) \Delta h\left( q_{1},q_{2}\right)
\end{equation*}%
\begin{equation*}
=\alpha h\left( q_{1},q_{2}\right) ^{2}-\nu
\end{equation*}%
\begin{equation*}
=\frac{(2-m_{3})}{\vartheta \left( M_{1}\times M_{2}\right) }%
\int\limits_{M_{1}\times M_{2}}\left\Vert gradh\right\Vert ^{2}+\frac{\alpha
}{\vartheta \left( M_{1}\times M_{2}\right) }\int\limits_{M_{1}\times
M_{2}}\left( h\left( q_{1},q_{2}\right) ^{2}-h^{2}\right) \geq 0.
\end{equation*}%
If $\alpha <0,$ then $h$ is a constant. This completes the proof.
\end{proof}

Using Corollary \ref{scalar}, we have the following theorem:

\begin{theorem}
\label{Theo1}Let $\overline{M}=\left( M_{1}\times _{f}M_{2}\right) \times
_{h}M_{3}$ be a quasi-Einstein sequential warped product space, where $M_{1}$
and $M_{2}$ are two compact spaces and $M_{3}$ is a compact quasi-Einstein
space. Then the following conditions hold for $U\in \chi \left( \overline{M}%
\right) $:

$i)$ If $scal_{3}\leq 0$, $\alpha ,\beta >0$ and $\Delta h\geq 0$, then $h$
is a constant.

$ii)$ If $m_{2}=1$ and $(\lambda >\alpha f^{2}$or $\lambda <\alpha f^{2})$,
then $f$ is a constant. Thus, $h$ is a constant when $\alpha ,\beta >0$ and $%
\Delta h>0$. Hence, it is a Riemannian product.

$iii)$ If $\left\Vert grad^{1}f\right\Vert \geq \sqrt{\frac{\lambda }{m_{2}-1%
}},$ $\left\Vert gradh\right\Vert \geq \sqrt{\frac{\nu }{m_{3}-1}}$ and $%
\alpha >0,$ then $f$ and $h$ are constants. Hence, it is a Riemannian
product.
\end{theorem}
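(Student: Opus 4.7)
The plan is to handle each of the three parts separately, in every case combining the formulas of Corollary \ref{scalar} or the auxiliary equations (\ref{lambda1}) and (\ref{nu1}) from Propositions \ref{cor2} and \ref{pro1} with a Hopf-type maximum/minimum principle argument on the compact base $M_{1}$ (or $M_{1}\times M_{2}$), in the same spirit as the proof of the previous theorem.

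For part $i)$, I would substitute directly into the expression for $scal_{3}$ furnished by Corollary \ref{scalar}. Under the sign conditions $\alpha,\beta>0$, $h>0$ and $\Delta h\geq 0$, each of the four summands $m_{3}\alpha h^{2}$, $m_{3}h\Delta h$, $m_{3}(m_{3}-1)\|grad h\|^{2}$ and $\beta h^{4}g_{3}(U_{3},U_{3})$ is nonnegative. Since the hypothesis gives $scal_{3}\leq 0$, every summand must vanish; in particular $(m_{3}-1)\|grad h\|^{2}\equiv 0$, which together with $m_{3}\geq 2$ forces $grad h\equiv 0$, so $h$ is constant on the connected manifold $M_{1}\times M_{2}$.

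For part $ii)$, setting $m_{2}=1$ reduces equation (\ref{lambda1}) to $\lambda=\alpha f^{2}+f\Delta^{1}f$. By compactness of $M_{1}$, $f$ attains a maximum at some $p$ and a minimum at some $q$, where $\Delta^{1}f(p)\leq 0$ and $\Delta^{1}f(q)\geq 0$, yielding $\lambda\leq\alpha f(p)^{2}$ and $\lambda\geq\alpha f(q)^{2}$ respectively. Hence the pointwise inequality $\lambda>\alpha f^{2}$ is violated at $p$ and $\lambda<\alpha f^{2}$ is violated at $q$, so in either branch of the hypothesis $f$ must be constant. Once $f$ is constant, the analogous extremum argument applied to $h$ on $M_{1}\times M_{2}$ via (\ref{nu1}), together with $\alpha,\beta>0$ and $\Delta h>0$ (which must fail at a maximum of $h$ unless $h$ is constant), forces $h$ to be constant, so the resulting metric is a Riemannian product.

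For part $iii)$, combining the hypothesis $(m_{2}-1)\|grad^{1}f\|^{2}\geq\lambda$ with (\ref{lambda1}) gives $f\Delta^{1}f\leq-\alpha f^{2}$. At a minimum point $q$ of $f$ on the compact base $M_{1}$, where $\|grad^{1}f\|^{2}(q)=0$, the hypothesis forces $\lambda\leq 0$, while (\ref{lambda1}) at $q$ together with $\Delta^{1}f(q)\geq 0$, $\alpha>0$ and $f(q)>0$ forces $\lambda\geq\alpha f(q)^{2}>0$; the only way to reconcile this on a connected compact manifold is that $f$ has no strict max/min separation, i.e.\ $f$ is constant. The parallel argument on $M_{1}\times M_{2}$, using (\ref{nu1}) and $\|grad h\|\geq\sqrt{\nu/(m_{3}-1)}$, forces $h$ to be constant; with both warping functions constant, $\overline{g}=g_{1}\oplus f^{2}g_{2}\oplus h^{2}g_{3}$ becomes a Riemannian product up to rescaling of $g_{2}$ and $g_{3}$. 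The main obstacle I anticipate is interpreting the strict hypotheses in parts $ii)$ and $iii)$ as pointwise conditions on the whole base so that the Hopf principle at a genuine extremum produces the required contradiction, and arguing carefully that the resulting incompatibility of inequalities at the extremum means $f$ (and $h$) must be constant on the connected compact base rather than merely that no such warping function exists.
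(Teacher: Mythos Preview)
Your strategy coincides with the paper's: each part rests on the same identities—Corollary \ref{scalar} for $scal_3$, and equations (\ref{lambda1}) and (\ref{nu1}) from Propositions \ref{cor2} and \ref{pro1}—followed by a sign/compactness argument. In parts $i)$ and $iii)$ the only difference is packaging: the paper routes the $scal_3$ inequality through the constant $\nu$ and records the conclusion as the sandwich $0\le \alpha f^{2}\le -f\Delta^{1} f\le 0$ (similarly for $h$), whereas you expand the sum directly and argue at an explicit extremum point; these are equivalent reformulations.

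The one place your route genuinely diverges is the $h$-step in part $ii)$. The paper does not appeal to (\ref{nu1}) and an extremum of $h$; instead it uses that $m_{2}=1$ forces $scal_{2}=0$, so equation (\ref{c.2}) combined with $\lambda=\alpha f^{2}$ collapses to
\[
\alpha f^{2}+\beta f^{4} g_{2}(U_{2},U_{2})+\frac{m_{3}}{h}\Delta h=0,
\]
and under $\alpha,\beta>0$ and $\Delta h>0$ every summand is positive, giving an immediate contradiction unless $h$ is constant. This is shorter than your Hopf-type argument because it exploits the extra information that a one-dimensional $M_{2}$ has vanishing scalar curvature. Your closing caveat—whether the contradictions really force the warping functions to be constant rather than showing the hypotheses are vacuous—is apt and applies equally to the paper's own argument.
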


\begin{proof}
Assume that $U$ $\in $ $\chi \left( \overline{M}\right) $. From Corollary %
\ref{scalar}, we have
\begin{equation}
scal_{1}=\alpha m_{1}+\beta g_{1}(U_{1},U_{1})+\frac{m_{2}}{f}\Delta ^{1}f+%
\frac{m_{3}}{h}\Delta h,  \label{c.1}
\end{equation}%
\begin{equation}
scal_{2}=\lambda m_{2}+\beta f^{4}g_{2}(U_{2},U_{2})+\frac{m_{3}}{h}\Delta h
\label{c.2}
\end{equation}%
and%
\begin{equation}
scal_{3}=\nu m_{3}+\beta h^{4}g_{3}(U_{3},U_{3}).  \label{c.3}
\end{equation}%
$i)$ Using the equation (\ref{c.3}), if $scal_{3}\leq 0$ and $\beta >0$,
then $\nu \leq 0$. From the Proposition \ref{pro1}, we can write%
\begin{equation*}
\alpha h^{2}+h\Delta h=\nu -(m_{3}-1)\left\Vert gradh\right\Vert ^{2}\leq 0.
\end{equation*}%
If $\alpha >0$ and $\Delta h\geq 0$ then, we have%
\begin{equation*}
0\leq h\Delta h\leq -\alpha h^{2}\leq 0.
\end{equation*}%
Therefore, we obtain that $h$ is a constant.

$ii)$ From the Proposition \ref{cor2} and $m_{2}=1$, we have
\begin{equation*}
\lambda -\alpha f^{2}=f\Delta ^{1}f.
\end{equation*}%
Using the above equation, if $\lambda >\alpha f^{2}$or $\lambda <\alpha
f^{2},$ then we find that $f$ is a constant. So we can write%
\begin{equation}
\lambda =\alpha f^{2}.  \label{c.4}
\end{equation}%
By the use of the equations (\ref{c.2}), (\ref{c.4}) and $m_{2}=1,$ we get%
\begin{equation}
\alpha f^{2}+\beta f^{4}g_{2}(U_{2},U_{2})+\frac{m_{3}}{h}\Delta h=0.
\label{c.5}
\end{equation}%
From equation (\ref{c.5}), $\alpha ,\beta >0$ and $\Delta h>0$, we obtain
that $h$ is a constant.

$iii)$ From the Proposition \ref{cor2} and \ref{pro1}, we have%
\begin{equation}
\alpha f^{2}+f\Delta ^{1}f=\lambda -(m_{2}-1)\left\Vert grad^{1}f\right\Vert
^{2}  \label{c.6}
\end{equation}%
and
\begin{equation}
\alpha h^{2}+h\Delta h=\nu -(m_{3}-1)\left\Vert gradh\right\Vert ^{2}.
\label{c.7}
\end{equation}%
Using the conditions $\left\Vert grad^{1}f\right\Vert \geq \sqrt{\frac{%
\lambda }{m_{2}-1}}$, $\left\Vert gradh\right\Vert \geq \sqrt{\frac{\nu }{%
m_{3}-1}}$ and $\alpha >0$ for equations (\ref{c.6}) and (\ref{c.7})
respectively, we get%
\begin{equation*}
0\leq \alpha f^{2}\leq -f\Delta ^{1}f\leq 0
\end{equation*}%
and%
\begin{equation*}
0\leq \alpha h^{2}\leq -h\Delta h\leq 0.
\end{equation*}%
Therefore, we deduce that $f$ and $h$ are constants. This completes the
proof.
\end{proof}

\section{\textbf{Applications}}

The warped products have attracted great attention in differential geometry
and physics because exact solutions to Einstein's equation are obtained with
the help of warped product space-time models \cite{BEE}, \cite{BEP}.
Generalized Robertson-Walker space-times and standard static space-times are
two well known solutions to Einstein's field equations. The standard static
space-times can be considered as a generalization of the Einstein static
universe \cite{DU}. In addition, the Robertson-Walker models in general
relativity describe a simply connected homogeneous isotropic expanding or
contracting universe. Then, generalized Robertson--Walker space-times extend
the Robertson--Walker space-times \cite{DS}. Sequential generalized
Robertson-Walker and sequential standard static space-times are introduced
in \cite{DSU}.

Let $\left( M_{i},g_{i}\right) $ be $m_{i}$-dimensional Riemannian
manifolds, where $i\in \left\{ 1,2,3\right\} .$ Let $f:M_{1}\rightarrow
\left( 0,\infty \right) $ and $h:M_{1}\times M_{2}\rightarrow \left(
0,\infty \right) $ be two smooth positive functions. Then $(m_{1}+m_{2}+1)$%
-dimensional product manifold $\overline{M}=\left( M_{1}\times
_{f}M_{2}\right) \times _{h}I$ equipped with the metric tensor%
\begin{equation}
\overline{g}=\left( g_{1}\oplus f^{2}g_{2}\right) \oplus h^{2}(-dt^{2})
\label{ssst}
\end{equation}%
is a sequential standard static space-time, where $I$ is an open, connected
subinterval of $%
\mathbb{R}
$ and $dt^{2}$ is the Euclidean metric tensor on $I$ \cite{DSU}. In
addition, $(m_{2}+m_{3}+1)$-dimensional product manifold $\overline{M}%
=\left( I\times _{f}M_{2}\right) \times _{h}M_{3}$ equipped with the metric
tensor%
\begin{equation}
\overline{g}=\left( -dt^{2}\oplus f^{2}g_{2}\right) \oplus h^{2}g_{3}
\label{GRW}
\end{equation}%
is a sequential generalized Robertson-Walker space-time, where $I$ is an
open, connected subinterval of $%
\mathbb{R}
$ and $dt^{2}$ is the Euclidean metric tensor on $I$ \cite{DSU}.

\begin{theorem}
Let $\overline{M}=\left( M_{1}\times _{f}M_{2}\right) \times _{h}I$ be a
sequential standard static space-time with metric $\overline{g}=\left(
g_{1}\oplus f^{2}g_{2}\right) \oplus h^{2}(-dt^{2}).$ Then $\left( \overline{%
M},\overline{g}\right) $ is a manifold of quasi-constant curvature if and
only if the following conditions are satisfied

$i)$ $\alpha =\beta h^{2}-\frac{\Delta h}{h},$

$ii)$ $\left( M_{1},g_{1}\right) $ is a quasi-Einstein manifold when the
Hessian tensors $H_{1}^{f}$ and $H^{h}$ satisfy the following forms with $%
U=U_{1}+\partial _{t}$%
\begin{equation*}
H_{1}^{f}(X_{1},Y_{1})=afg_{1}(X_{1},Y_{1})+bfg_{1}(X_{1},U_{1})g_{1}(Y_{1},U_{1})
\end{equation*}%
and%
\begin{equation*}
H^{h}(X_{1},Y_{1})=(-a+h^{2})hg_{1}(X_{1},Y_{1})-bhg_{1}(X_{1},U_{1})g_{1}(Y_{1},U_{1}),
\end{equation*}%
where $a,b\in C^{\infty }\left( \overline{M},%
\mathbb{R}
\right) $ and $b\neq 0.$

$iii)$ $\left( M_{2},g_{2}\right) $ is an Einstein manifold when the Hessian
tensor $H^{h}$ satisfies the following form with $U=U_{1}+\partial _{t}$%
\begin{equation*}
H^{h}(X_{2},Y_{2})=(-a+h^{2})f^{2}hg_{2}(X_{2},Y_{2})
\end{equation*}%
where $a,b\in C^{\infty }\left( \overline{M},%
\mathbb{R}
\right) $ and $b\neq 0.$
\end{theorem}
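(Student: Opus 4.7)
The plan is to prove both directions simultaneously by computing the curvature tensor of the sequential standard static space-time via Lemma \ref{lemma2} (specialised to $M_3 = I$, $m_3 = 1$, and $g_3 = -dt^2$) and matching it term-by-term against the defining identity of a manifold of quasi-constant curvature. I decompose $U = U_1 + U_2 + \partial_t$ in general; the asserted form $U = U_1 + \partial_t$ (i.e.\ $U_2 = 0$) will either be taken as a working hypothesis and later verified, or will emerge as a consistency condition from the mixed case analysis below.

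The core of the argument is a systematic examination of $\overline{R}(X,Y,Z,W)$ over all combinations of vectors drawn from the three factors $M_1$, $M_2$, $I$. The mixed case $(X_1, Y_2, Z_1, W_2)$ uses Lemma \ref{lemma2}(iii), producing $-\tfrac{1}{f}H_1^f(X_1,Z_1)Y_2$ on the left; matching this with the right-hand side of the quasi-constant curvature identity, which there splits into a $\overline{g}$-piece and an $A \otimes A$-piece, forces $H_1^f$ to have the stated decomposition with coefficients $af$ and $bf$. The cases $(X_i, \partial_t, Z_j, \partial_t)$ for $i,j \in \{1,2\}$ use Lemma \ref{lemma2}(vii)--(viii); matching produces the two Hessian formulas for $h$ on $M_1$ and on $M_2$ respectively, where in the $i = j = 2$ case the absence of an $A \otimes A$ contribution (because $U_2 = 0$) forces $H^h|_{M_2 \times M_2}$ to be purely proportional to $g_2$, which combined with Lemma \ref{lemma3}(ii) and Proposition \ref{cor1} gives the Einstein structure of $(M_2, g_2)$. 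The pure $(X_1, Y_1, Z_1, W_1)$ case through Lemma \ref{lemma2}(i) together with Proposition \ref{cor1} and the now-known forms of $H_1^f$ and $H^h$ yields the quasi-Einstein structure on $(M_1, g_1)$. Condition (i) then drops out by tracing $\overline{Ric}(\partial_t, \partial_t)$ via Lemma \ref{lemma3}(iii) (with $m_3 = 1$ and $Ric^3 = 0$) and comparing with $\overline{Ric} = \alpha \overline{g} + \beta A \otimes A$ evaluated at $(\partial_t, \partial_t)$.

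For the sufficiency direction I would substitute the Hessian expressions from (ii)--(iii) and the quasi-Einstein structure on $(M_1, g_1)$, together with (i), into Lemma \ref{lemma2} and verify directly that $\overline{R}$ reproduces the quasi-constant curvature formula on every triple of vectors. The main obstacle I foresee is the coefficient bookkeeping in case the $H^h$ matching: the summand $-a + h^2$ in the coefficient of $H^h(X_1,Y_1)$ must be traced through the $\overline{g}$-products on the right-hand side, where $\overline{g}(\partial_t, \partial_t) = -h^2$ contributes an $h^2$ shift to the Hessian coefficient, while simultaneously the $a g_1 \cdot g_1$ piece on the right must combine consistently with the contribution of $H_1^f$ in the pure $M_1$ case. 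Keeping the signs, the conformal factors $f^2$ and $h^2$, and the Lorentzian factor $-1$ all consistent across the several cases will be the principal computational burden.
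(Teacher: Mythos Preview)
Your approach is correct but differs from the paper's. The paper does not work with the full curvature tensor $\overline{R}$ at all: instead it immediately passes from quasi-constant curvature to the weaker quasi-Einstein condition $\overline{Ric}=\alpha\overline{g}+\beta A\otimes A$, applies the Ricci formulas of Lemma~\ref{lemma3} (specialised to $m_3=1$) to obtain (i) from $\overline{Ric}(\partial_t,\partial_t)$, and then \emph{imports} the Hessian formulas for $H_1^f$ and $H^h$ as a black box from Theorem~4.8 of \cite{DSU}; substituting those into the Ricci identities for $X_1,Y_1$ and $X_2,Y_2$ gives the quasi-Einstein and Einstein conclusions on $M_1$ and $M_2$ respectively. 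The converse is dismissed as trivial.

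Your route---matching $\overline{R}$ case by case against the quasi-constant curvature identity via Lemma~\ref{lemma2}---is more self-contained: the Hessian formulas are \emph{derived} from the mixed cases $(X_1,Y_2,Z_1,W_2)$ and $(X_i,\partial_t,Z_j,\partial_t)$ rather than quoted. In fact the pure $(X_1,Y_1,Z_1,W_1)$ case via Lemma~\ref{lemma2}(i) gives directly that $(M_1,g_1)$ has quasi-constant curvature (hence is quasi-Einstein), so you do not need Proposition~\ref{cor1} there; similarly Lemma~\ref{lemma2}(ii) on $M_2$ already forces $R^2$ to be of constant-curvature type once $U_2=0$. Your approach is heavier computationally but buys independence from \cite{DSU} and makes the sufficiency direction a genuine verification rather than an assertion. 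One caution: be sure to treat $U=U_1+\partial_t$ as a standing hypothesis throughout (as the theorem statement does), rather than trying to deduce $U_2=0$ from the curvature conditions---the mixed cases alone will not force it.
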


\begin{proof}
From \cite{DSU}, we have%
\begin{equation}
\overline{Ric}(X_{1},Y_{1})=Ric^{1}(X_{1},Y_{1})-\frac{m_{2}}{f}%
H_{1}^{f}(X_{1},Y_{1})-\frac{1}{h}H^{h}(X_{1},Y_{1}),  \label{d1}
\end{equation}%
\begin{equation*}
\overline{Ric}(X_{2},Y_{2})=Ric^{2}(X_{2},Y_{2})
\end{equation*}%
\begin{equation}
-\left( f\Delta ^{1}f+\left( m_{2}-1\right) \left\Vert grad^{1}f\right\Vert
^{2}\right) g_{2}(X_{2},Y_{2})-\frac{1}{h}H^{h}(X_{2},Y_{2}),  \label{d2}
\end{equation}%
\begin{equation}
\overline{Ric}(\partial _{t},\partial _{t})=h\Delta h.  \label{d3}
\end{equation}%
Let $\left( \overline{M},\overline{g}\right) $ be a manifold of
quasi-constant curvature. Thus, $\left( \overline{M},\overline{g}\right) $
is a quasi-Einstein manifold. Using the equations (\ref{quasieinstein}) and (%
\ref{1form}), we have%
\begin{equation}
\overline{Ric}(\partial _{t},\partial _{t})=-\alpha h^{2}+\beta h^{4}.
\label{d4}
\end{equation}%
From the equations (\ref{d3}) and (\ref{d4}), we find%
\begin{equation*}
\alpha =\beta h^{2}-\frac{\Delta h}{h}.
\end{equation*}%
Using Theorem 4.8 in \cite{DSU}, we obtain
\begin{equation}
H^{h}(X_{1},Y_{1})=(-a+h^{2})hg_{1}(X_{1},Y_{1})-bhg_{1}(X_{1},U_{1})g_{1}(Y_{1},U_{1})
\label{d5}
\end{equation}%
and%
\begin{equation}
H_{1}^{f}(X_{1},Y_{1})=afg_{1}(X_{1},Y_{1})+bfg_{1}(X_{1},U_{1})g_{1}(Y_{1},U_{1})
\label{d6}
\end{equation}%
with $U=U_{1}+\partial _{t}$. By the use of the equations (\ref%
{quasieinstein}), (\ref{d5}) and (\ref{d6}) into (\ref{d1}), we obtain%
\begin{equation*}
Ric^{1}(X_{1},Y_{1})=(\alpha +\left( m_{2}-1\right)
a+h^{2})g_{1}(X_{1},Y_{1})+\left( \beta +\left( m_{2}-1\right) b\right)
g_{1}(X_{1},U_{1})g_{1}(Y_{1},U_{1}).
\end{equation*}%
Hence, $\left( M_{1},g_{1}\right) $ is a quasi-Einstein manifold.

Using Theorem 4.8 in \cite{DSU}, we calculate%
\begin{equation}
H^{h}(X_{2},Y_{2})=(-a+h^{2})f^{2}hg_{2}(X_{2},Y_{2}),  \label{d7}
\end{equation}%
with $U=U_{1}+\partial _{t}$. Using the equations (\ref{quasieinstein}) and (%
\ref{d7}) into (\ref{d2}), we find%
\begin{equation*}
Ric^{2}(X_{2},Y_{2})=\left( \alpha f^{2}+f\Delta ^{1}f+\left( m_{2}-1\right)
\left\Vert grad^{1}f\right\Vert ^{2}-af^{2}+f^{2}h^{2}\right)
g_{2}(X_{2},Y_{2}).
\end{equation*}%
Thus, we obtain that $\left( M_{2},g_{2}\right) $ is an Einstein manifold.
The converse is trivial. This completes the proof.
\end{proof}

\begin{theorem}
Let $\overline{M}=\left( I\times _{f}M_{2}\right) \times _{h}M_{3}$ be a
sequential generalized Robertson-Walker space-time with metric $\overline{g}%
=\left( -dt^{2}\oplus f^{2}g_{2}\right) \oplus h^{2}g_{3}.$ Then $\left(
\overline{M},\overline{g}\right) $ is a manifold of quasi-constant curvature
if and only if the following conditions are satisfied

$i)$ $\beta -\alpha =\frac{m_{2}}{f}f^{\prime \prime }-\frac{m_{3}}{h}\frac{%
\partial ^{2}h}{\partial t^{2}},$

$ii)$ $\left( M_{2},g_{2}\right) $ is a quasi-Einstein manifold when the
Hessian tensor $H^{h}$ satisfies the following form with $U=\partial
_{t}+U_{2},$%
\begin{equation*}
H^{h}(X_{2},Y_{2})=ahf^{2}g_{2}(X_{2},Y_{2})+bhf^{4}g_{2}(X_{2},U_{2})g_{2}(Y_{2},U_{2}),
\end{equation*}%
where $a,b\in C^{\infty }\left( \overline{M},%
\mathbb{R}
\right) $ and $b\neq 0.$

$iii)$ $\left( M_{3},g_{3}\right) $ is an Einstein manifold with $U=\partial
_{t}+U_{2}$.
\end{theorem}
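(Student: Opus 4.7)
The plan is to parallel the proof of the previous theorem (for the sequential standard static space-time), adapting it to the case where the first factor is the interval $I$ with Lorentzian metric $-dt^{2}$, so that $M_{1}$ is one-dimensional and flat (in particular $Ric^{1}=0$ and the Christoffel symbols of $\nabla^{1}$ vanish). Throughout I use that a manifold of quasi-constant curvature is automatically quasi-Einstein, so $\overline{Ric}=\alpha\overline{g}+\beta A\otimes A$ with $A(X)=\overline{g}(X,U)$, and I decompose $U=\partial_{t}+U_{2}$ as indicated in the statement.

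First, I would apply Lemma \ref{lemma3} to the metric $\overline{g}=\left(-dt^{2}\oplus f^{2}g_{2}\right)\oplus h^{2}g_{3}$ to write out the three block-wise Ricci identities. Specializing part $(i)$ to $X_{1}=Y_{1}=\partial_{t}$ and using $H_{1}^{f}(\partial_{t},\partial_{t})=f''$ and $H^{h}(\partial_{t},\partial_{t})=\partial_{t}^{2}h$ yields an explicit formula for $\overline{Ric}(\partial_{t},\partial_{t})$ in terms of $f''$ and $\partial_{t}^{2}h$. Matching this against the quasi-Einstein value $\overline{Ric}(\partial_{t},\partial_{t})=-\alpha+\beta$ (computed from $\overline{g}(\partial_{t},\partial_{t})=-1$ and $A(\partial_{t})=-1$) yields condition $(i)$.

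Next, for the $M_{3}$-block I note that $U_{3}=0$, so $A$ vanishes on $\chi(M_{3})$ and the quasi-Einstein identity collapses to $\overline{Ric}(X_{3},Y_{3})=\alpha h^{2}g_{3}(X_{3},Y_{3})$. Inserting this into Lemma \ref{lemma3}$(iii)$ and solving for $Ric^{3}$ gives $Ric^{3}=\mu g_{3}$ for some smooth $\mu$, proving $(iii)$. For the $M_{2}$-block I substitute the prescribed form of $H^{h}(X_{2},Y_{2})$ into Lemma \ref{lemma3}$(ii)$ and use $\overline{Ric}(X_{2},Y_{2})=\alpha f^{2}g_{2}(X_{2},Y_{2})+\beta f^{4}g_{2}(X_{2},U_{2})g_{2}(Y_{2},U_{2})$ (the $A$-contribution now picking up only $U_{2}$) to solve for $Ric^{2}$. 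The resulting identity is exactly the quasi-Einstein form required in $(ii)$, with the scalar coefficient of $g_{2}$ absorbing the terms $\alpha f^{2}$, $f\Delta^{1}f$, $(m_{2}-1)\|\mathrm{grad}^{1}f\|^{2}$ and the trace part of $H^{h}$.

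The principal technical point is to justify the prescribed forms of the Hessians $H^{h}(X_{1},Y_{1})$, $H_{1}^{f}(X_{1},Y_{1})$ and $H^{h}(X_{2},Y_{2})$ in the first place; this is exactly where Theorem 4.8 of \cite{DSU} enters, since it identifies these Hessian components by matching the sequential warped-product curvature pieces from Lemma \ref{lemma2} against the quasi-constant-curvature shape $R(X,Y,Z,W)=a[g(Y,Z)g(X,W)-g(X,Z)g(Y,W)]+b[\cdots]$. That bookkeeping step -- confirming that the mixed-index components of $\overline{R}$ coming from Lemma \ref{lemma2} $(iii)$, $(iv)$, $(vii)$, $(viii)$ reproduce precisely the two-term quasi-constant form -- is the main obstacle, but the work is already done in the cited theorem. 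The converse is then immediate: substituting the prescribed Hessians back into Lemma \ref{lemma3} and invoking the Einstein hypothesis on $(M_{3},g_{3})$ and the quasi-Einstein hypothesis on $(M_{2},g_{2})$ recovers $\overline{Ric}=\alpha\overline{g}+\beta A\otimes A$, after which the full curvature identity of quasi-constant curvature type follows by reassembling Lemma \ref{lemma2}.
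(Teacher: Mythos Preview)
Your proposal is correct and follows essentially the same route as the paper: specialize the Ricci formulas from \cite{DSU} to the three blocks, read off condition $(i)$ from the $\partial_{t}$-component, and use the quasi-Einstein identity together with the prescribed Hessian of $h$ to recover the quasi-Einstein (resp.\ Einstein) structure on $M_{2}$ (resp.\ $M_{3}$). The only correction is the external citation: in the GRW setting the paper invokes Proposition~4.2 of \cite{DSU} to obtain the form of $H^{h}(X_{2},Y_{2})$, not Theorem~4.8, which is the analogous result used in the preceding standard-static theorem.
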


\begin{proof}
From \cite{DSU}, we have%
\begin{equation}
\overline{Ric}(\partial _{t},\partial _{t})=\frac{m_{2}}{f}f^{\prime \prime
}+\frac{m_{3}}{h}\frac{\partial ^{2}h}{\partial t^{2}},  \label{e1}
\end{equation}%
\begin{equation*}
\overline{Ric}(X_{2},Y_{2})=Ric^{2}(X_{2},Y_{2})-\left( -ff^{\prime \prime
}-\left( m_{2}-1\right) \left( f^{\prime }\right) ^{2}\right)
g_{2}(X_{2},Y_{2})
\end{equation*}%
\begin{equation}
-\frac{m_{3}}{h}H^{h}(X_{2},Y_{2}),  \label{e2}
\end{equation}%
\begin{equation}
\overline{Ric}(X_{3},Y_{3})=Ric^{3}(X_{3},Y_{3})-\left( h\Delta
h+(m_{3}-1)\left\Vert gradh\right\Vert ^{2}\right) g_{3}(X_{3},Y_{3}).
\label{e3}
\end{equation}%
Let $\left( \overline{M},\overline{g}\right) $ be a manifold of
quasi-constant curvature. Thus, $\left( \overline{M},\overline{g}\right) $
is a quasi-Einstein manifold. Using the equations (\ref{quasieinstein}) and (%
\ref{1form}), we get%
\begin{equation}
\overline{Ric}(\partial _{t},\partial _{t})=-\alpha +\beta .  \label{e4}
\end{equation}%
Using the equations (\ref{e1}) and (\ref{e4}), we obtain%
\begin{equation*}
\beta -\alpha =\frac{m_{2}}{f}f^{\prime \prime }+\frac{m_{3}}{h}\frac{%
\partial ^{2}h}{\partial t^{2}}.
\end{equation*}%
Using Proposition 4.2 in \cite{DSU}, we obtain
\begin{equation}
H^{h}(X_{2},Y_{2})=ahf^{2}g_{2}(X_{2},Y_{2})+bhf^{4}g_{2}(X_{2},U_{2})g_{2}(Y_{2},U_{2})
\label{e5}
\end{equation}%
with $U=\partial _{t}+U_{2}$. In view of the equations (\ref{quasieinstein})
and (\ref{e5}) into (\ref{e2}), it follows that%
\begin{equation*}
Ric^{2}(X_{2},Y_{2})=\left( \alpha f^{2}-ff^{\prime \prime }-\left(
m_{2}-1\right) \left( f^{\prime }\right) ^{2}+m_{3}af^{2}\right)
g_{2}(X_{2},Y_{2})
\end{equation*}%
\begin{equation*}
+\left( \beta +m_{3}b\right) f^{4}g_{2}(X_{2},U_{2})g_{2}(Y_{2},U_{2}).
\end{equation*}%
Hence, $\left( M_{2},g_{2}\right) $ is a quasi-Einstein manifold. Using the
equations (\ref{quasieinstein}) into (\ref{e3}), we find%
\begin{equation*}
Ric^{3}(X_{3},Y_{3})=\left( \alpha h^{2}+h\Delta h+(m_{3}-1)\left\Vert
gradh\right\Vert ^{2}\right) g_{3}(X_{3},Y_{3}).
\end{equation*}%
Thus, $\left( M_{3},g_{3}\right) $ is an Einstein manifold with $U=\partial
_{t}+U_{2}$. The converse is trivial. This completes the proof.
\end{proof}

\bigskip

Fatma KARACA

Beykent University,

Department of Mathematics,

34550, \.{I}stanbul, Turkey.

E-mail: fatmagurlerr@gmail.com\newline
\newline

Cihan \"{O}ZG\"{U}R

Bal\i kesir University,

Department of Mathematics,

10145, \c{C}a\u{g}\i \c{s}, Bal\i kesir, Turkey.

E-mail: cozgur@balikesir.edu.tr

\end{document}